\theoremstyle{plain}
\newtheorem{lem}{Lemma}[section]
\newtheorem{cor}[lem]{Corollary}
\newtheorem{prop}[lem]{Proposition}
\newtheorem{thm}[lem]{Theorem}
\newtheorem*{thmm}{Theorem}
\theoremstyle{definition}
\newtheorem{ex}[lem]{Example}
\newtheorem{dfn}[lem]{Definition}
\newcommand{\Z}{\mathbb{Z}}      
\newcommand{\Q}{\mathbb{Q}}     
\newcommand{\ssI}{\widetilde{I}}   
\newcommand{\sG}{\widetilde{G}}  
\newcommand{\cs}{\mathfrak{c}}        
\newcommand{\gX}{X^\mathrm{gen}}        
\newcommand{\gU}{U^\mathrm{gen}}        
\newcommand{\sH}{\mathcal{H}}              
\newcommand{\Os}{\mathcal{O}}                  
\newcommand{\Spec}{\operatorname{Spec}}   
\newcommand{\Dec}{\operatorname{Dec}}      
\newcommand{\SDec}{\operatorname{SDec}}    
\newcommand{\Pic}{\operatorname{Pic}}      
\newcommand{\Inv}{\operatorname{Inv}}      
\newcommand{\Nrd}{\operatorname{Nrd}}   
\newcommand{\Sym}{\operatorname{Sym}}  
\newcommand{\CH}{\operatorname{CH}}             
\newcommand{\im}{\operatorname{im}}            
\newcommand{\charac}{\operatorname{char}}   
\newcommand{\norm}{\mathrm{norm}}   
\newcommand{\dec}{\mathrm{dec}}        
\newcommand{\sdec}{\mathrm{sdec}}     
\newcommand{\ind}{\mathrm{ind}}         
\newcommand{\tors}{\mathrm{tors}}       
\newcommand{\tCH}{\scriptscriptstyle{\rm CH}}   
\newcommand{\tK}{\scriptscriptstyle{\rm K_0}}    
\newcommand{\QZtwo}{\Q/\Z(2)}
\newcommand{\QZd}{\Q/\Z(d)}
\newcommand{\gPGSp}{\operatorname{\mathbf{PGSp}}}
\newcommand{\gSL}{\operatorname{\mathbf{SL}}}
\newcommand{\gPGL}{\operatorname{\mathbf{PGL}}}
\newcommand{\gHSpin}{\operatorname{\mathbf{HSpin}}}
\newcommand{\gPGO}{\operatorname{\mathbf{PGO}}}
\newcommand{\gSO}{\operatorname{\mathbf{SO}}}
\newcommand{\gSpin}{\operatorname{\mathbf{Spin}}}
\newcommand{\Gm}{\mathbb{G}_m}         
\newcommand{\gmu}{\boldsymbol{\mu}}         
\title{Invariants of degree 3 and torsion in the Chow group of a versal flag}
\author{Alexander Merkurjev} 
\author{Alexander Neshitov}
\author{Kirill Zainoulline}
\address{Alexander Merkurjev, University of California at Los Angeles}
\email{merkurev@math.ucla.edu}
\address{Alexander Neshitov, Steklov Mathematical Institute,  St.Petersburg / University of Ottawa}
\email{neshitov@yandex.ru}
\address{Kirill Zainoulline, University of Ottawa}
\email{kirill@uottawa.ca}
\thanks{The second author was supported by the Trillium Foundation (Ontario) and the RFBR grant 12-01-33057. The last author was supported by the NSERC Discovery grant  385795-2010 and the Early Researcher Award (Ontario).}
\subjclass[2010]{11E72, 14M17, 14F43}
\keywords{cohomological invariant, linear algebraic group, torsor, Galois cohomology, Chow group}
\begin{document}

\begin{abstract}
We prove that the group of normalized cohomological invariants of
degree 3 modulo the subgroup of semidecomposable invariants of a
semisimple split linear algebraic group $G$ is isomorphic to the torsion
part of the Chow group of codimension 2 cycles of the respective
versal $G$-flag.
In particular, if $G$ is simple, we show that this factor group is
isomorphic to the group of indecomposable invariants of $G$.
As an application, we construct nontrivial cohomological classes for
indecomposable central simple algebras.
\end{abstract}

\maketitle


\section{Introduction}

Let $G$ be a split semisimple linear algebraic group over a field $F$.
The purpose of the present paper is to relate together three different
topics: the {\em geometry} of twisted $G$-flag varieties, the theory
of {\em cohomological
invariants} of $G$ and the {\em representation theory} of $G$.

\medskip

As for the first,
let $U/G$ be a {\em classifying space} of $G$ in the sense of Totaro, that is $U$ is an open
$G$-invariant subset in some representation of $G$ with $U(F)\neq
\emptyset$ and $U\to U/G$ is a $G$-torsor. Consider the generic fiber $\gU$ of $U$ over $U/G$. It is a $G$-torsor over the quotient
field $K$ of $U/G$ called the {\em versal} $G$-torsor \cite[Ch.~I,~\S
5]{GMS}. We denote by $\gX$ the respective flag variety $\gU/B$ over
$K$, where $B$ is a Borel subgroup of $G$, and call it the {\em
  versal} flag. The variety $\gX$ can be viewed as the `most
twisted' form of the `most complicated' $G$-flag variety and, hence,
is the most natural object to study.
In particular, understanding its geometry via studying the {\em Chow group} $\CH(\gX)$ of
algebraic cycles modulo the rational equivalence relation, leads to
understanding the geometry of all other $G$-flag varieties.

Recall that the group $\CH(X)$ of a twisted flag variety $X$ has been
a subject of intensive investigations for decades: started with
fundamental results by Grothendieck, Demazure,
Berstein-Gelfand-Gelfand in 70's describing its {\em free part},
inspired by its close connections to the motivic cohomology discovered in
90's by Voevodsky and 
numerous results by Karpenko, Peyre, and
many others (including the authors of the present paper) trying to
estimate its {\em torsion part}. 

\medskip

Our second ingredient, the theory of cohomological invariants, has
been mainly inspired by the works of J.-P.~Serre and M.~Rost.
Given a field extension $L/F$ and a positive integer $d$ we consider the Galois cohomology group $H^{d+1}(L,\QZd)$ denoted by $H^{d+1}(L,d)$.
Following~\cite[Ch.~II,~\S 1]{GMS} a degree $d$ {\em cohomological invariant} is a natural transformation of functors
\[
a\colon H^1(\,\text{---}\,,G)\to H^d(\,\text{---}\,,d-1)
\]
on the category of field extensions over $F$. We denote the group of
degree $d$ invariants by $\Inv^d(G,d-1)$.

Following \cite[\S1]{Merkurjev}
invariant $a$ is called {\em normalized} if it sends trivial torsor to
zero. We denote the subgroup of normalized invariants by $\Inv^d(G,d-1)_\norm$.
Invariant $a$ is called {\em decomposable} if it is given by a
cup-product with an invariant of degree $2$. We denote the subgroup of
decomposable invariants by $\Inv^3(G,2)_\dec$. The factor group
$\Inv^3(G,2)_\norm/\Inv^3(G,2)_\dec$ is denoted by $\Inv^3(G,2)_\ind$
and is called the group of {\em indecomposable} invariants.
This group has been studied by Garibaldi, Kahn, Levine, Rost,
Serre and others in the simply-connected case and is closely related
to the celebrated Rost-Serre invariant. 
In recent work \cite{Merkurjev} it was shown how to
compute it in general using new results on motivic cohomology
obtained in \cite{MerkurjevBG}. In particular, it was computed for all
adjoint split groups in \cite{Merkurjev} and for split simple groups in \cite{BR}.

\medskip

As for the last ingredient, the representation theory of $G$,
it had established itself a long time ago originating from the
theory of Lie algebras in the middle of the last century.
Recall that the classical {\em character map} identifies
the representation ring of $G$ with the subring
$\Z[T^*]^W$ of $W$-invariant elements of the integral group ring
$\Z[T^*]$,
where $W$ is the Weyl group which acts naturally on the group of
characters $T^*$ of a split maximal torus $T$ of $G$, hence, providing
a straightforward link to the classical Invariant theory.

\medskip

We glue all these ingredients together by introducing a new subgroup of
{\em semi-decomposable} invariants $\Inv^3(G,2)_\sdec$ which consists
of invariants $a \in \Inv^3(G,2)_\norm$ such that for every field
extension $L/F$ and a $G$-torsor $Y$ over $L$
\[
a(Y)=\sum_{i\; \text{finite}} \phi_i\cup b_i(Y)\text{ for some }\phi_i\in
L^{\times}\text{ and }b_i\in \Inv^2(G,1)_\norm.
\]
Roughly speaking, it consists of invariants that are `locally decomposable'. 
Observe that by definition $\Inv^3(G,2)_\dec \subseteq
\Inv^3(G,2)_\sdec \subseteq \Inv^3(G,2)_\norm$.

\medskip

Our main result then says that

\begin{thmm} Let $G$ be a split semisimple linear algebraic group
  over a field $F$ and let $\gX$ denote the associated versal flag.
There is a short exact sequence 
\[
0\to \tfrac{\Inv^3(G,2)_\sdec}{\Inv^3(G,2)_\dec} \to
\Inv^3(G,2)_{\operatorname{ind}}\to\CH^2(\gX)_\tors\to 0,
\]
together with a group isomorphism
$\tfrac{\Inv^3(G,2)_\sdec}{\Inv^3(G,2)_\dec}\simeq
\tfrac{c_2((\ssI^W)\cap\Z[T^*])}{c_2(\Z[T^*]^W)}$,
where
$c_2$ is the second {\em Chern class} map
(e.g. see \cite[\S 3c]{Merkurjev}) and
$(\ssI^W)$ denote
the ideal generated by classes of augmented representations of the
simply-connected cover of $G$.

In addition, if $G$ is simple,
then $\Inv^3(G,2)_\sdec= \Inv^3(G,2)_\dec$, so there is an isomorphism 
$\Inv^3(G,2)_\ind \simeq \CH^2(\gX)_\tors$.
\end{thmm}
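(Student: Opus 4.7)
The plan is to construct a natural map $\Phi \colon \Inv^3(G,2)_\ind \to \CH^2(\gX)_\tors$, show it is surjective with kernel $\Inv^3(G,2)_\sdec/\Inv^3(G,2)_\dec$, and then identify this kernel via representation theory. Following the motivic-cohomological framework of \cite{Merkurjev,MerkurjevBG}, I would identify $\Inv^3(G,2)_\norm$ with a suitable normalized \'etale cohomology group of the classifying space $BG=U/G$, under which $\Inv^3(G,2)_\dec$ corresponds to cup products of classes pulled back from $\Pic(BG)$. The map $\Phi$ is then induced by pulling back from $BG$ to $\gX$ and composing with the connecting map $H^3(\gX,\QZtwo)\to H^4(\gX,\Z(2))=\CH^2(\gX)$, whose image lies in the torsion subgroup because $\gX$ is smooth, projective and geometrically cellular, so that $\CH^2(\gX_{\bar F})$ is torsion-free.

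For surjectivity of $\Phi$ I would invoke the Rost--Voevodsky theorem together with a computation of $\CH^2(\gX)_\tors$ for versal flags, lifting each torsion cycle class to a motivic cohomology class on $\gX$ that descends via a transfer argument to an invariant on $BG$. To determine $\ker\Phi$, note that semi-decomposable invariants manifestly die in $\CH^2(\gX)_\tors$ because cup products with characters factor through $\Pic(\gX)\cdot\Pic(\gX)$, which lies in the torsion-free part; conversely, if $\Phi(a)=0$, then the restriction of $a(\gU)$ to $F(\gX)$ is a sum of local symbols by the Bloch--Ogus/Gersten exact sequence, and a specialization argument upgrades this local decomposition to a genuine semi-decomposition.

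The remaining representation-theoretic assertions would follow by expressing the semi-decomposable and decomposable subgroups as images of the second Chern class map $c_2$: decomposable invariants correspond to $c_2$ of elements of $\Z[T^*]^W$ (i.e.\ to Chern classes of honest $G$-representations), while semi-decomposable invariants correspond to $c_2$ of $W$-invariant elements of $\Z[\widetilde{T}^*]$ that happen to land in $\Z[T^*]$, i.e.\ to elements of $(\ssI^W)\cap\Z[T^*]$. For $G$ simple, a direct inspection of the character lattice inclusion $T^*\subset\widetilde{T}^*$ shows that $c_2((\ssI^W)\cap\Z[T^*])$ already coincides with $c_2(\Z[T^*]^W)$ in $\Sym^2(T^*)$, giving $\Inv^3(G,2)_\sdec=\Inv^3(G,2)_\dec$. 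The main obstacle will be proving the equality $\ker\Phi=\Inv^3(G,2)_\sdec$ (as opposed to the easy inclusion $\supseteq$): this requires promoting a local symbol-decomposition of $a(\gU)|_{F(\gX)}$ in the Gersten complex into a global cup-product identity valid over every field extension, which is where the rigidity of the versal torsor and the behaviour of unramified cohomology on cellular varieties must be exploited.
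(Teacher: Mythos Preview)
Your overall architecture for the first part is close to the paper's, but two steps are not as you describe them, and the second part has a genuine gap.

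\textbf{Surjectivity and the identification of $\CH^2(\gX)_\tors$.} The paper does not lift torsion cycles to invariants by a transfer or Rost--Voevodsky argument. Instead it computes $\CH^2(\gX)$ directly: using the characteristic maps $\cs^{\tCH}\colon \Sym(T^*)\xrightarrow{\simeq}\CH(U/B)$ and $\cs^{\tK}\colon\Z[T^*]\twoheadrightarrow K_0(U/B)$, together with Steinberg's theorem on $\ker(\Z[\Lambda]\to K_0(G/B))$, one finds $\ker\imath^{\tCH}=\cs_2((\ssI^W)\cap I^2)$ and hence $\CH^2(\gX)_\tors\simeq \Sym^2(T^*)^W/\SDec(G)$. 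Surjectivity of $\Phi$ then follows from the trivial surjection $\Sym^2(T^*)^W/\Dec(G)\twoheadrightarrow\Sym^2(T^*)^W/\SDec(G)$ combined with Merkurjev's isomorphism $\Inv^3(G,2)_\ind\simeq\Sym^2(T^*)^W/\Dec(G)$; the map $\Phi$ itself is built from Kahn's exact sequences for $U/G$, $U/B$, $\gX$. This is also how the isomorphism $\Inv^3(G,2)_\sdec/\Inv^3(G,2)_\dec\simeq\SDec(G)/\Dec(G)$ drops out. Your proposed route to surjectivity is not obviously viable without essentially redoing this computation.

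\textbf{The kernel.} Your reasoning that semi-decomposable invariants die in $\CH^2(\gX)_\tors$ because ``cup products factor through $\Pic(\gX)\cdot\Pic(\gX)$ in the torsion-free part'' is not the right mechanism. The relevant exact sequence is $L\otimes\Lambda\xrightarrow{\rho_Y}\ker[H^3(L,2)\to H^3(L(Y/B),2)]\xrightarrow{\delta_Y}\CH^2(Y/B)$, and semi-decomposable means $a(Y)\in\im\rho_Y=\ker\delta_Y$ for every $Y$. The converse (your ``main obstacle'') is handled not by a Gersten-complex argument on $\gX$ but by a specialization: if $\delta_{\gU}(a(\gU))=0$, then for any torsor $Y$ over $L$ one completes the local ring of $U/G$ at a rational point with fiber $Y$, uses Grothendieck's theorem to identify $Y_{\hat K}\simeq \gU_{\hat K}$, and pulls $\delta_Y(a(Y))=0$ back via the injective specialization $\CH^2(Y/B)\hookrightarrow\CH^2((Y/B)_{\hat K})$.

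\textbf{The simple case.} This is the main gap. The claim that for $G$ simple a ``direct inspection of the character lattice inclusion'' gives $c_2((\ssI^W)\cap\Z[T^*])=c_2(\Z[T^*]^W)$ is far too optimistic. The paper devotes an entire section to a type-by-type verification: many cases follow from prior computations of $\Dec(G)$ and $\Inv^3(G,2)_\ind$ (Merkurjev, Bermudez--Ruozzi, Karpenko), but adjoint $C_{4m}$ and half-spin/adjoint $D_{2m}$ require explicit calculations with $W$-orbits of fundamental weights and parity arguments in $\Z[\Lambda/T^*]$, and $\gPGO_8$ needs a computer-assisted Gr\"obner basis computation. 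There is no uniform lattice-theoretic argument; indeed Example~\ref{counterO4} shows the equality fails already for $\gSO_4$, so nothing short of the simple hypothesis will suffice, and the proof must exploit it case by case.
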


Observe that if $G$ is not simple, then $\Inv^3(G,2)_\sdec$ does not
necessarily coincide with $\Inv^3(G,2)_\dec$ (see Example~\ref{counterO4}).

\medskip

The nature of our result suggests that it should have applications
in several directions, e.g. for cohomological invariants and algebraic cycles on
twisted flag varieties. In the present paper we discuss only few of them.

\medskip

For instance, since the group $\Inv^3(G,2)_{\ind}$ has been computed for all
simple split groups in \cite{Merkurjev} and \cite{BR}, it immediately
gives computation of the torsion part of $\CH^2(\gX)$, hence,
extending previous results by \cite{Ka98} and
\cite{Peyre}. 
As another straightforward consequence, using the coincidence
$\Inv^3(G,2)_\sdec=\Inv^3(G,2)_\dec$ we construct non-trivial
cohomological classes for indecomposable central simple algebras,
hence, answering questions posed in \cite{GPT09} and \cite{Demba13}.

\medskip

The paper is organized as follows: In section~\ref{direct} we
construct an exact sequence relating the groups of invariants with the
torsion part of the Chow group, hence, proving the first part of the
theorem. In section~\ref{coincidence} we compute this exact sequence
case by case for all simple groups, hence, proving the second part. In
the last section we discuss applications.

\section{Semi-decomposable invariants and the Chow group}\label{direct}

Let $G$ be a split semisimple linear algebraic group over a field
$F$. We fix a split maximal torus $T$ of $G$ and a Borel subgroup $B$
containing $T$. 
Consider the $T$-equivariant structure map $U\to \Spec F=pt$, where $U$ is the open $G$-invariant
subset from the introduction. 

\subsubsection*{Characteristic maps and classes}
By~\cite{EG} the induced pullback on
$T$-equivariant Chow groups $\CH_T(pt) \to \CH_T(U)$ is an isomorphism.
Since $\CH_T(U)\simeq \CH(U/T)\simeq
\CH(U/B)$ and $\CH_T(pt)$ can be identified with the symmetric algebra
$\Sym(T^*)$ of the group of characters of $T$, it gives an isomorphism
\begin{equation}\label{CHisoU}
\cs^{\tCH}\colon \Sym(T^*) \xrightarrow{\simeq} \CH(U/B).
\end{equation}

Similarly, by the homotopy invariance and localization property of the
equivariant $K$-theory~\cite[Theorems~8 and~11]{MerkurjevK} the induced pull-back on
$T$-equivariant $K$-groups gives a surjection
\[
\cs^{\tK}\colon \Z[T^*]\twoheadrightarrow K_0(U/B),
\]
where the integral group ring $\Z[T^*]$ can be identified with $K_T(pt)$ and $K_0(U/B)\simeq K_0(U/T)\simeq K_T(U)$.

\medskip

Let $\tau^i(X)$ denote the $i$-th term of the {\em topological
  filtration} on $K_0$ of a smooth variety $X$ and let $\tau^{i/i+1}$,
$i\ge 0$ denote its $i$-th subsequent quotient. Let $I$ denote the
augmentation ideal of $\Z[T^*]$.

\begin{lem}\label{isoquot}
The map $\cs^{\tK}$ induces isomorphisms on subsequent quotients 
\[
 I^i/I^{i+1} \xrightarrow{\simeq} \tau^{i/i+1}(U/B),\quad \text{ for }0\le i\le 2,
\]
and, its restriction $\cs^{\tK}\colon I^2 \to \tau^2(U/B)$ is surjective.
\end{lem}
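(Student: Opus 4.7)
The plan is to compare the $I$-adic filtration on $\Z[T^*]$ with the topological filtration on $K_0(U/B)$ via the already-established Chow-group isomorphism~\eqref{CHisoU}. Two classical inputs are required. First, because $T^*$ is free abelian of finite rank, passage to the $I$-adic completion (a formal power series ring in the generators $y_i=e^{\lambda_i}-1$ for any basis $\{\lambda_i\}$ of $T^*$) furnishes natural identifications $I^i/I^{i+1}\simeq\Sym^i(T^*)$ for every $i\ge 0$. Second, for any smooth variety $X$ the $i$-th Chern class induces a canonical isomorphism $\tau^{i/i+1}(X)\simeq\CH^i(X)$ for $i\le 2$, the correction factor $(-1)^{i-1}(i-1)!$ arising from Grothendieck--Riemann--Roch being a unit in this range.

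First I would verify the inclusion $\cs^{\tK}(I^i)\subseteq\tau^i(U/B)$. The ideal $I$ is additively generated by elements $e^\lambda-1$, and $\cs^{\tK}(e^\lambda-1)=[L_\lambda]-1$ is a virtual bundle of rank zero, hence lies in $\tau^1$. Multiplicativity of the topological filtration then forces $\cs^{\tK}(I^i)\subseteq\tau^i$ for every $i$.

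The main step is to assemble the commutative square
\[
\xymatrix{
I^i/I^{i+1} \ar[r]^-{\cs^{\tK}} \ar[d]_-\simeq & \tau^{i/i+1}(U/B) \ar[d]^-\simeq \\
\Sym^i(T^*) \ar[r]^-{\cs^{\tCH}}_-\simeq & \CH^i(U/B)
}
\]
for $0\le i\le 2$. Commutativity I would check on generators, using $c_1([L_\lambda]-1)=\cs^{\tCH}(\lambda)$ together with the fact that $\bigoplus_i\tau^{i/i+1}(U/B)\to\CH^*(U/B)$ is a homomorphism of graded rings. Since the left, bottom and right arrows are isomorphisms, so is the top, giving the first assertion of the lemma.

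For the surjectivity of $\cs^{\tK}\colon I^2\to\tau^2(U/B)$ a short lifting argument suffices. Given $x\in\tau^2(U/B)$, pick $z\in\Z[T^*]$ with $\cs^{\tK}(z)=x$, which exists by the stated surjectivity of $\cs^{\tK}$. Since $x\in\tau^1$ has rank zero, $z$ lies in the augmentation ideal $I$. The image of $z$ in $I/I^2$ then maps, under the isomorphism just established in degree one, to the image of $x$ in $\tau^{1/2}(U/B)$, which vanishes because $x\in\tau^2$; hence $z\in I^2$, as required. The main technical hurdle is really the identification $\tau^{2/3}\simeq\CH^2$ for smooth varieties --- for $i=0,1$ the analogous statements reduce to rank and the first Chern class --- but this is classical and can be quoted.
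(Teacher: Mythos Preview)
Your argument is correct and follows essentially the same route as the paper's own proof: identify $I^i/I^{i+1}\simeq\Sym^i(T^*)$, invoke the Chern-class isomorphisms $\tau^{i/i+1}(X)\simeq\CH^i(X)$ for $i\le 2$ from Fulton, and conclude via the Chow isomorphism~\eqref{CHisoU}; the surjectivity step is likewise identical, deducing $x\in I^2$ from the degree-one isomorphism $I/I^2\simeq\tau^{1/2}(U/B)$. Your version simply spells out in more detail (the inclusion $\cs^{\tK}(I^i)\subseteq\tau^i$ and the commutativity of the square) what the paper leaves implicit.
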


\begin{proof}
By \cite[Ex.~15.3.6]{Fu} the Chern class maps induce isomorphisms $c_i \colon \tau^{i/i+1}(X) \xrightarrow{\simeq} \CH^i(X)$ for $0\le i\le 2$. Since the Chern classes commute with pullbacks and
$\Sym^i(T^*)\simeq I^i/I^{i+1}$, the isomorphisms then follow
from~\eqref{CHisoU}.

Finally, since $I/I^2 \simeq \tau^{1/2}(U/B)$, $\cs^{\tK}(x)\in \tau^2(U/B)$ implies
that $x\in I^2$.
\end{proof}

Consider the natural inclusion of the versal flag $\imath\colon
\gX=\gU/B \hookrightarrow U/B$. Since $\imath$ is a limit of open embeddings, by
the {\em localization property} of Chow groups, the induced pullback
gives surjections
\[
\imath^{\tCH}\colon\CH^i(U/B)\twoheadrightarrow \CH^i(\gX).
\]
Moreover, the induced pullback in $K$-theory restricted to $\tau^i$
also gives surjections
\[
\imath^{\tK}\colon \tau^i(U/B) \twoheadrightarrow \tau^i(\gX).
\]
Indeed, by definition $\tau^i(\gX)$ is generated by the classes $[\Os_Z]$ for closed subvarieties $Z$ of $\gX$ with $\operatorname{codim} Z\geqslant i$ and each $[\Os_Z]$ is the pullback of the element $[\Os_{\bar{Z}}]$ in $\tau^i(U/B)$, where $\bar{Z}$ is the closure of $Z$ inside $U/B$.

\medskip

Let $L$ be a splitting field of the versal torsor $\gU$. According to~\cite[Thm.~4.5]{GiZa} composites 
\[
\Sym(T^*) \xrightarrow{\cs^{\tCH}} \CH(U/B) \xrightarrow{\imath^{\tCH}} \CH(\gX) \xrightarrow{res} \CH(\gX_L)\quad \text{ and}
\] 
\[
\Z[T^*] \xrightarrow{\cs^{\tK}} K_0(U/B) \xrightarrow{\imath^{\tK}} K_0(\gX) \xrightarrow{res} K_0(\gX_L)
\] 
give the classical characteristic maps for the Chow groups and for the
$K$-groups respectively (here we identify the rightmost groups with
the Chow group and the $K$-group of the split flag $G/B$
respectively). Restricting the latter to $I^2$ and $\tau^2$ we obtain
the map
\[
\cs\colon I^2 \xrightarrow{\cs^{\tK}} \tau^2(U/B) \xrightarrow{\imath^{\tK}} \tau^2(\gX) \xrightarrow{res} \tau^2(\gX_L)=\tau^2(G/B).
\]

From this point on, we denote by $\cs^{\tCH}$, $\imath^{\tCH}$ and by
$\cs^{\tK}$, $\imath^{\tK}$ the
respective restrictions to $\Sym^2$, $\CH^2$ and $I^2$, $\tau^2$.

Let $\Lambda$ be the weight lattice. Consider the integral group ring
$\Z[\Lambda]$.
Let $\ssI$ denote its augmentation ideal. The Weyl group $W$ acts
naturally on $\Z[\Lambda]$. Let $(\ssI^W)$ denote the
ideal generated by $W$-invariant elements in $\ssI$.

\begin{lem}\label{l2}
The kernel of the composite $I^2\stackrel{\cs^{\tK}}\to \tau^2(U/B)\stackrel{\imath^{\tK}}\to \tau^2(\gX)$ is $(\ssI^W)\cap I^2$.
\end{lem}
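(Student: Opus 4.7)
The plan is to compare the kernel in question with the kernel of the classical $K$-theoretic characteristic map for the split flag $G/B$, exploiting the injectivity of the restriction map to a splitting field of $\gU$.

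Composing $I^2\to\tau^2(\gX)$ further with the inclusion $\tau^2(\gX)\subseteq K_0(\gX)$ and the restriction $K_0(\gX)\xrightarrow{res} K_0(\gX_L)=K_0(G/B)$ to a splitting field $L$ of $\gU$, one obtains by \cite[Thm.~4.5]{GiZa} the restriction to $I^2$ of the classical characteristic map $\Z[T^*]\to K_0(G/B)$. Since $G/B$ coincides with the flag variety of the simply-connected cover of $G$, the Pittie--Steinberg theorem identifies $K_0(G/B)\simeq \Z[\Lambda]/(\ssI^W)$ and the characteristic map with the composite $\Z[T^*]\hookrightarrow \Z[\Lambda]\twoheadrightarrow \Z[\Lambda]/(\ssI^W)$. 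Its kernel is $(\ssI^W)\cap\Z[T^*]$, and intersecting with $I^2$ gives $(\ssI^W)\cap I^2$. The inclusion $\ker\subseteq (\ssI^W)\cap I^2$ follows immediately, since any element killed by $\imath^{\tK}\circ\cs^{\tK}$ is also killed by the further restriction to $K_0(G/B)$.

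For the reverse inclusion one uses that the restriction $K_0(\gX)\to K_0(G/B)$ is injective; this is a consequence of Panin's theorem on the $K$-theory of twisted flag varieties, which presents $K_0(\gX)$ as a free abelian subring of $K_0(G/B)$. Once this is known, the kernel of the full composite $I^2\to K_0(G/B)$ coincides with $\ker(I^2\to K_0(\gX))$, and since $I^2\to\tau^2(\gX)$ factors through the subgroup $\tau^2(\gX)\subseteq K_0(\gX)$, the two kernels agree, both equal to $(\ssI^W)\cap I^2$.

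The main obstacle is the injectivity $K_0(\gX)\hookrightarrow K_0(G/B)$ at the versal flag; any alternative approach would need to work directly with an element $x\in(\ssI^W)\cap I^2$ written as $\sum\lambda_i\beta_i$ with $\lambda_i\in\Z[\Lambda]$ and $\beta_i\in\ssI^W$, and show that its image vanishes in $\tau^2(\gX)$. The difficulty there is that individual $\lambda_i$ may lie outside $\Z[T^*]$, so they cannot be directly fed through $\cs^{\tK}$; interpreting the $\beta_i$ via representations of the simply-connected cover and the versal cocycle class in $H^2$ would be the technical heart of such an argument.
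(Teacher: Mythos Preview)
Your proposal is correct and follows essentially the same route as the paper: reduce to the kernel of the classical characteristic map for $G/B$ via Steinberg's theorem, and use Panin's description of $K_0(\gX)$ as a free abelian group to obtain injectivity of the restriction to a splitting field. The only cosmetic difference is that the paper states the injectivity for $\tau^2(\gX)\to\tau^2(\gX_L)$ directly rather than passing through $K_0(\gX)\hookrightarrow K_0(G/B)$, but since $\tau^2\subseteq K_0$ this is the same argument.
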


\begin{proof}
By the results of Panin~\cite{Panin}, $K_0(\gX)$ is the direct sum of $K_0(A_i)$ for some central simple algebras $A_i$ over $K$. So $K_0(\gX)$ is a free abelian group and, hence, the restriction $\tau^2(\gX)\to \tau^2(\gX_L)$ is injective. Therefore, $\ker (\imath^{\tK}\circ\cs^{\tK})$ coincides with the kernel of the characteristic map $\cs \colon I^2\to \tau^2(G/B)$. Since $\cs$ factors as $I^2\hookrightarrow \ssI^2 \to \tau^2(G/B)$ and the kernel of the second map is $(\ssI^W)\cap \ssI^2$ by the theorem of Steinberg \cite{St},  we get $\ker \cs= (\ssI^W) \cap I^2$ (here we used that $\Z[T^*]\cap \ssI^i=I^i$).
\end{proof}

Consider the second Chern class map $c_2\colon \tau^2(U/B)\to\CH^2(U/B)$.

\begin{lem}\label{l1}
We have $c_2(\ker \imath^{\tK})=\ker \imath^{\tCH}$.
\end{lem}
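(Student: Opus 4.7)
The plan is to work with the commutative square
\[
\begin{CD}
\tau^2(U/B) @>{\imath^{\tK}}>> \tau^2(\gX)\\
@V{c_2}VV @VV{c_2}V\\
\CH^2(U/B) @>{\imath^{\tCH}}>> \CH^2(\gX)
\end{CD}
\]
arising from the naturality of the second Chern class. The inclusion $c_2(\ker \imath^{\tK})\subseteq \ker \imath^{\tCH}$ is then immediate. The content is the reverse inclusion, which I would obtain by a short diagram chase combined with three auxiliary facts: (i) by \cite[Ex.~15.3.6]{Fu}, $c_2$ factors through an isomorphism $\tau^{2/3}(X)\xrightarrow{\sim}\CH^2(X)$, so both vertical arrows above are surjective, with kernels $\tau^3(U/B)$ and $\tau^3(\gX)$ respectively; (ii) the pull-back $\imath^{\tK}\colon \tau^3(U/B)\twoheadrightarrow \tau^3(\gX)$ is again surjective, by exactly the same structure-sheaf argument given just before Lemma~\ref{l1}; and (iii) the restriction of $c_2$ to $\tau^2$ is an additive homomorphism (and not merely additive modulo $\tau^3$).

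Granting these, the chase goes as follows. Given $y\in \ker \imath^{\tCH}$, use surjectivity of the left-hand $c_2$ to choose $x\in \tau^2(U/B)$ with $c_2(x)=y$. Commutativity forces $c_2(\imath^{\tK}(x))=\imath^{\tCH}(y)=0$, so $\imath^{\tK}(x)\in \ker\bigl(c_2\colon \tau^2(\gX)\to \CH^2(\gX)\bigr)=\tau^3(\gX)$. By (ii) we can lift this to some $z\in \tau^3(U/B)$ with $\imath^{\tK}(z)=\imath^{\tK}(x)$. Then $x-z\in \ker \imath^{\tK}$, and by (iii) together with $c_2(z)=0$ we get $c_2(x-z)=c_2(x)=y$, which shows $y\in c_2(\ker \imath^{\tK})$. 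Equivalently, one can package this as a snake-lemma argument applied to the short exact sequences $0\to \tau^3\to \tau^2\to \tau^{2/3}\to 0$ on $U/B$ and on $\gX$, using that the outer two vertical maps are surjective.

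I expect the only real subtlety to be justifying (iii), that $c_2$ is genuinely additive on $\tau^2(U/B)$ rather than just on $\tau^{2/3}(U/B)$. This is a consequence of the Whitney sum formula once one observes that the first Chern class $c_1$ vanishes on $\tau^2$ (since the map $c_1\colon \tau^{1/2}\xrightarrow{\sim}\CH^1$ is an isomorphism). Writing $c(x+x')=c(x)c(x')$ in total Chern classes and isolating the codimension-$2$ component for $x,x'\in \tau^2$ then gives $c_2(x+x')=c_2(x)+c_2(x')$, and similarly $c_2(-z)=-c_2(z)$ for $z\in\tau^2$. Everything else in the argument is formal bookkeeping with the commutative diagram.
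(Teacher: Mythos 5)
Your proposal is correct and takes essentially the same approach as the paper: the paper's proof is precisely a diagram chase on the two short exact sequences $\tau^3\to\tau^2\xrightarrow{c_2}\CH^2\to 0$ over $U/B$ and over $\gX$ (exactness citing Fulton, Ex.~15.3.6) with the three vertical pullback maps all surjective. Your snake-lemma packaging, and the extra care you take in justifying that $c_2$ is genuinely additive on $\tau^2$ via the Whitney formula and vanishing of $c_1$ there, only spells out what the paper's appeal to the exactness of the rows implicitly assumes.
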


\begin{proof}
Consider the diagram
\[
\xymatrix{
\tau^3(U/B)\ar[r]\ar[d]_{\imath^{\tK}|_{\tau^3}} & \tau^2(U/B)\ar[r]^-{c_2}\ar[d]^{\imath^{\tK}} & \CH^2(U/B)\ar[r]\ar[d]^{\imath^{\tCH}} & 0\\
\tau^3(\gX)\ar[r] & \tau^2(\gX)\ar[r]^-{c_2} & \CH^2(\gX)\ar[r] & 0\\
}
\]
Its vertical maps are surjective and the rows are exact by \cite[Ex.~15.3.6]{Fu}. The result then follows by the diagram chase.
\end{proof}

Consider the composite $\cs_2\colon I^2\xrightarrow{\cs^{\tK}} \tau^2(U/B)\xrightarrow{c_2}\CH^2(U/B)$. Observe that it coincides with the Chern class map defined in~\cite[\S 3c]{Merkurjev}.

\begin{lem}\label{kernel}
We have $\ker \imath^{\tCH}=\cs_2((\ssI^W)\cap I^2)$.
\end{lem}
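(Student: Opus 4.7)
The plan is to combine the three preceding lemmas by a short diagram chase, using that $\cs_2 = c_2 \circ \cs^{\tK}$ by construction. Lemma~\ref{isoquot} will supply the surjectivity of $\cs^{\tK} \colon I^2 \twoheadrightarrow \tau^2(U/B)$, Lemma~\ref{l2} will identify the kernel $(\ssI^W) \cap I^2$ of $\imath^{\tK} \circ \cs^{\tK}$, and Lemma~\ref{l1} will translate kernels in $K$-theory to kernels in the Chow group.

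For the inclusion $\cs_2((\ssI^W) \cap I^2) \subseteq \ker \imath^{\tCH}$, I would take $x \in (\ssI^W) \cap I^2$, observe by Lemma~\ref{l2} that $\cs^{\tK}(x) \in \ker \imath^{\tK}$, and then apply Lemma~\ref{l1} to conclude $\cs_2(x) = c_2(\cs^{\tK}(x)) \in c_2(\ker \imath^{\tK}) = \ker \imath^{\tCH}$.

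For the reverse inclusion, I would take $y \in \ker \imath^{\tCH}$ and lift it to $K$-theory: by Lemma~\ref{l1} there exists $z \in \ker \imath^{\tK} \subseteq \tau^2(U/B)$ with $c_2(z) = y$. The surjectivity part of Lemma~\ref{isoquot} then gives $x \in I^2$ with $\cs^{\tK}(x) = z$. Since $\imath^{\tK}(\cs^{\tK}(x)) = \imath^{\tK}(z) = 0$, Lemma~\ref{l2} forces $x \in (\ssI^W) \cap I^2$, and $y = c_2(\cs^{\tK}(x)) = \cs_2(x)$ lies in the claimed image.

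The main (mild) obstacle is purely bookkeeping: one must be careful that the surjection $\cs^{\tK} \colon I^2 \twoheadrightarrow \tau^2(U/B)$ from Lemma~\ref{isoquot} lands exactly where needed in order to lift an element of $\ker \imath^{\tK}$ to an element of $I^2$, and that the identification $\cs_2 = c_2 \circ \cs^{\tK}$ matches the Chern class map of~\cite[\S 3c]{Merkurjev}. Otherwise, the argument is a one-step diagram chase with no serious difficulty.
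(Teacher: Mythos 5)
Your proposal is correct and follows essentially the same route as the paper: the paper compresses the same diagram chase into the one-line chain of equalities $\cs_2((\ssI^W)\cap I^2)=\cs_2(\ker(\imath^{\tK}\circ\cs^{\tK}))=c_2(\ker \imath^{\tK})=\ker \imath^{\tCH}$, invoking Lemma~\ref{l2}, the surjectivity from Lemma~\ref{isoquot}, and Lemma~\ref{l1} in exactly the way you spell out. Your two-inclusion unpacking is just a more explicit rendering of the identical argument.
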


\begin{proof}
Since $\cs^{\tK}$ is surjective by lemma~\ref{isoquot}, we have by lemma~\ref{l2} and~\ref{l1}
\[
\cs_2((\ssI^W)\cap I^2)=\cs_2(\ker(\imath^{\tK}\circ\cs^{\tK}))=c_2(\ker \imath^{\tK})=\ker \imath^{\tCH}. \qedhere
\]
\end{proof}

Following~\cite{Merkurjev} we denote 
\[
\Dec(G):=(\cs^{\tCH})^{-1}\circ \cs_2(\Z[T^*]^W).
\]
And we set
\[
\SDec(G):=(\cs^{\tCH})^{-1}\circ \cs_2((\ssI^W)\cap\Z[T^*]).
\]
Since the action of $W$ on $\Lambda$ is essential, i.e. $\Lambda^W=0$,
we have $(\ssI^W)\subseteq\ssI^2$. Therefore, for any $x\in(\ssI^W)$
we have $x\equiv x' \text{ mod }\ssI^3$ and, hence,
$\cs_2(x)=\cs_2(x')$ for some $x'\in\Z[\Lambda]^W$, where
$\Z[\Lambda]^W$ is the subring of $W$-invariants. So there are inclusions 
\begin{equation}\label{modulo3}
\Dec(G)\subseteq\SDec(G)\subseteq \Sym^2(T^*)^W.
\end{equation}

\begin{lem}
We have $\CH^2(\gX)\simeq \Sym^2(T^*)/\SDec(G)$.
\end{lem}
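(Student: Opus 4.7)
The plan is to combine the surjection $\imath^{\tCH} \colon \CH^2(U/B) \twoheadrightarrow \CH^2(\gX)$ with the characteristic isomorphism $\cs^{\tCH} \colon \Sym^2(T^*) \xrightarrow{\simeq} \CH^2(U/B)$ from \eqref{CHisoU}, and then identify the kernel using Lemma~\ref{kernel}. Concretely, the surjection of $\imath^{\tCH}$ (established via the localization property right before Lemma~\ref{l2}) gives
\[
\CH^2(\gX) \;\simeq\; \CH^2(U/B)/\ker \imath^{\tCH},
\]
and transporting across $\cs^{\tCH}$ reduces the statement to verifying
\[
(\cs^{\tCH})^{-1}(\ker \imath^{\tCH}) \;=\; \SDec(G).
\]

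By Lemma~\ref{kernel}, $\ker \imath^{\tCH} = \cs_2\bigl((\ssI^W)\cap I^2\bigr)$, whereas by definition $\SDec(G) = (\cs^{\tCH})^{-1}\circ \cs_2\bigl((\ssI^W)\cap \Z[T^*]\bigr)$. The proof therefore reduces to showing
\[
(\ssI^W)\cap I^2 \;=\; (\ssI^W)\cap \Z[T^*]
\]
as subgroups of $\Z[\Lambda]$ (so that the two images under $\cs_2$ coincide). The inclusion $\supseteq$ follows because $(\ssI^W) \subseteq \ssI^2$ (since $W$ acts essentially on $\Lambda$, as noted just before~\eqref{modulo3}), together with the identity $\ssI^2 \cap \Z[T^*] = I^2$ recorded in the proof of Lemma~\ref{l2}; thus any element of $(\ssI^W)\cap \Z[T^*]$ already lies in $I^2$. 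The inclusion $\subseteq$ is immediate from $I^2 \subseteq \Z[T^*]$.

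Combining these observations yields $\ker \imath^{\tCH} = \cs_2\bigl((\ssI^W)\cap \Z[T^*]\bigr)$, and pulling back through the isomorphism $\cs^{\tCH}$ on degree two gives the desired identification $\CH^2(\gX) \simeq \Sym^2(T^*)/\SDec(G)$. No genuine obstacle is expected: the content is already in Lemma~\ref{kernel} and the isomorphism~\eqref{CHisoU}; the only subtle point is the set-theoretic identity between the two intersections, which is handled by the essentiality of the $W$-action.
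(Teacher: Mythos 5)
Your proof is correct and takes the same route as the paper: combine the isomorphism~\eqref{CHisoU} with Lemma~\ref{kernel} and the surjectivity of $\imath^{\tCH}$. You additionally make explicit the identity $(\ssI^W)\cap I^2 = (\ssI^W)\cap\Z[T^*]$ (via $(\ssI^W)\subseteq\ssI^2$ and $\ssI^2\cap\Z[T^*]=I^2$), a step the paper leaves implicit when replacing $\cs_2\bigl((\ssI^W)\cap I^2\bigr)$ by $\SDec(G)$.
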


\begin{proof}
By \eqref{CHisoU} and lemma~\ref{kernel} we have 
\[
\CH^2(\gX)\simeq \CH^2(U/B)/\cs_2((\ssI^W)\cap I^2) \simeq \Sym^2(T^*)/\SDec(G). \qedhere
\]
\end{proof}

\begin{cor}\label{chowtwo} 
We have $\CH^2(\gX)_\tors\simeq \Sym^2(T^*)^W/\SDec(G)$.
\end{cor}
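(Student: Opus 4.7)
The plan is to start from the preceding lemma's identification $\CH^2(\gX) \simeq \Sym^2(T^*)/\SDec(G)$ and extract the torsion subgroup directly. By the inclusions \eqref{modulo3}, there is a short exact sequence
\[
0 \to \Sym^2(T^*)^W/\SDec(G) \to \Sym^2(T^*)/\SDec(G) \to \Sym^2(T^*)/\Sym^2(T^*)^W \to 0,
\]
so it suffices to show that the leftmost term is torsion and the rightmost term is torsion-free; then the torsion subgroup of the middle is exactly the leftmost group, giving the claim.

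Torsion-freeness on the right is formal: if $nx \in \Sym^2(T^*)^W$ for some nonzero integer $n$ and $x\in\Sym^2(T^*)$, then $n(w(x)-x)=0$ in the free abelian group $\Sym^2(T^*)$ for every $w\in W$, whence $w(x)=x$. Thus $\Sym^2(T^*)^W$ is a saturated subgroup of $\Sym^2(T^*)$.

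For the leftmost term, since $\Dec(G)\subseteq\SDec(G)$, it is enough to establish the rational equality $\Dec(G)\otimes\Q = \Sym^2(T^*)^W\otimes\Q$. Rationally, $\Sym^2(T^*)^W\otimes\Q$ is spanned by the $W$-orbit sums $\sum_{w\in W} w(\lambda\mu)$ with $\lambda,\mu\in T^*$, via the averaging projector $\tfrac{1}{|W|}\sum_w w$. To realize each such orbit sum inside $\Dec(G)$, consider
\[
x_{\lambda,\mu} := \sum_{w\in W} w\bigl((e^{\lambda}-1)(e^{\mu}-1)\bigr) \in \Z[T^*]^W \cap I^2,
\]
using that $T^*$ is $W$-stable. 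By Lemma \ref{isoquot}, $\cs^{\tK}(I^2)\subseteq \tau^2(U/B)$, and $c_1$ vanishes on $\tau^2$; combined with the multiplicativity of the total Chern polynomial, this forces $\cs_2$ to be additive on $I^2$, and a direct computation gives $\cs_2\bigl((e^{\lambda}-1)(e^{\mu}-1)\bigr) = -\lambda\mu$ under the identification \eqref{CHisoU}. Summing over $W$ yields $\cs_2(x_{\lambda,\mu}) = -\sum_w w(\lambda\mu)$, and hence every $W$-orbit sum lies in $\Dec(G)$, as required.

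The main obstacle is the Chern-class computation itself: one has to verify that $\cs_2$ is additive on $I^2$ (which relies on $c_1$ vanishing on $\tau^2$, via Lemma \ref{isoquot}) and that it carries $(e^{\lambda}-1)(e^{\mu}-1)$ to $-\lambda\mu$ in $\Sym^2(T^*)$ with the correct sign and identification; the remaining steps, including the averaging argument and the torsion-free criterion, are essentially formal once this key computation is in hand.
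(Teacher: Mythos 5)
Your proof is correct, and it follows the same overall framework as the paper's: reduce via the preceding lemma and the chain of inclusions \eqref{modulo3} to showing that $\Sym^2(T^*)/\Sym^2(T^*)^W$ is torsion-free (you give exactly the paper's argument) and that $\Sym^2(T^*)^W/\SDec(G)$ is torsion. The one place you deviate is in this last step: the paper uses surjectivity of $c_2\colon I^2\to\Sym^2(T^*)$ (from Lemma~\ref{isoquot}) to pick an arbitrary preimage $y\in I^2$ of a given $x\in\Sym^2(T^*)^W$, averages over $W$ to get $y'\in\Z[T^*]^W\cap I^2$, and reads off $\cs_2(y')=|W|\cdot x\in\SDec(G)$ from $W$-equivariance; you instead realize a rational spanning set of $\Sym^2(T^*)^W$ (the $W$-orbit sums of monomials) explicitly, via the direct computation $\cs_2\bigl((e^{\lambda}-1)(e^{\mu}-1)\bigr)=-\lambda\mu$ and the averaged preimages $x_{\lambda,\mu}\in\Z[T^*]^W\cap I^2$. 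Both are averaging arguments; yours trades the abstract surjectivity statement for a short explicit Chern-class calculation (which checks out: $c\bigl((e^\lambda-1)(e^\mu-1)\bigr)=(1+\lambda+\mu)/\bigl((1+\lambda)(1+\mu)\bigr)=1-\lambda\mu+\cdots$), and incidentally establishes the slightly stronger statement that already $\Dec(G)$, not just $\SDec(G)$, has finite index in $\Sym^2(T^*)^W$.
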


\begin{proof}
By the lemma it remains to show that
\[
(\Sym^2(T^*)/\SDec(G))_\tors=\Sym^2(T^*)^W/\SDec(G).
\] 
Indeed, suppose that $x\in \Sym^2(T^*)$ and $nx\in \SDec(G)$. Then $nx$ lies in $\Sym^2(T^*)^W$ by~\eqref{modulo3}. So for every $w\in W$ we have $n(wx-x)=0$. Since $\Sym^2(T^*)$ has no torsion, $x\in \Sym^2(T^*)^W$. Conversely, let $x\in \Sym^2(T^*)^W$. Since the second Chern class map $c_2\colon I^2\to \Sym^2(T^*)$ is surjective, there is a preimage $y\in I^2$ of $x$. Take $y'=\sum_{w\in W}w\cdot y\in \Z[T^*]^W\subseteq(\ssI^W)\cap\Z[T^*]$. Since $c_2$ is $W$-equivariant and coincides with the composite $(\cs^{\tCH})^{-1}\circ \cs_2$, we get $(\cs^{\tCH})^{-1}\circ \cs_2(y')=|W|\cdot x\in\SDec(G)$.
\end{proof}

\subsubsection*{Cohomological Invariants}

For a smooth $F$-scheme $X$ let $\sH^3(2)$ denote the Zariski sheaf on
$X$ associated to a presheaf $W\mapsto
H^3_{\text{\'et}}(W,\QZtwo)$. The Bloch-Ogus-Gabber
theorem~(see \cite{CTHK} and \cite{GrSu}) implies that its group of global sections $H^0_{\text{Zar}}(X,\sH^3(2))$ is a subgroup in $H^3(F(X),2)$. 

\medskip

Consider the versal $G$-torsor $\gU$ over the quotient field $K$ of
the classifying space $U/G$.
By \cite[Thm.~A]{BM} the map $\Theta\colon \Inv^3(G,2)\to H^3(K,2)$ defined by $\Theta(a):=a(\gU)$ gives an inclusion  
\[
\Inv^3(G,2)\hookrightarrow H^0_{\text{Zar}}(U/G,\sH^3(2)).
\]

\begin{lem}\label{theta}
We have $a(\gU)\in\ker[H^3(K,2)\to H^3(K(\gX),2)]$ for any $a\in \Inv^3(G,2)_\norm$.
\end{lem}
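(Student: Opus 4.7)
The plan is to exploit the interpretation of $K(\gX)$ as the function field of the flag variety of the versal torsor and the triviality of $B$-torsors over fields, together with the functoriality and normalization of $a$.

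First I would observe that a cohomological invariant $a$ is, by definition, a natural transformation of functors on field extensions, so the image of $a(\gU)\in H^3(K,2)$ under the restriction $H^3(K,2)\to H^3(K(\gX),2)$ coincides with $a(\gU_{K(\gX)})$, where $\gU_{K(\gX)}$ denotes the base change of $\gU$ along the field extension $K\hookrightarrow K(\gX)$.

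Next I would argue that $\gU_{K(\gX)}$ is in fact a trivial $G$-torsor. The variety $\gX=\gU/B$ is a $G/B$-bundle over $\Spec K$ whose generic point yields a canonical $K(\gX)$-point of $\gU/B$. Such a point is precisely the datum of a reduction of structure group of $\gU_{K(\gX)}$ from $G$ to $B$, i.e.\ there exists a $B$-torsor $P$ over $K(\gX)$ with $\gU_{K(\gX)}\simeq P\times^B G$. Since $B$ is split solvable, it fits into an exact sequence $1\to U\to B\to T\to 1$ with $T$ a split torus and $U$ a split unipotent group. Hilbert~90 gives $H^1(L,T)=0$ for any field $L$, and $H^1(L,U)=0$ since $U$ is split unipotent; hence the induced map $H^1(K(\gX),B)\to H^1(K(\gX),G)$ factors through a pointed set with one element. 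In particular $P$ is trivial, so $\gU_{K(\gX)}$ is trivial as a $G$-torsor.

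Finally, since $a\in \Inv^3(G,2)_\norm$ sends the trivial torsor to zero, we conclude
\[
a(\gU)|_{K(\gX)} \;=\; a(\gU_{K(\gX)}) \;=\; 0,
\]
which is the desired vanishing. The only subtle point is verifying that the generic point of $\gX$ really provides a reduction of structure group to $B$ (rather than merely to a parabolic), but this is immediate from the construction $\gX=\gU/B$: by definition of the quotient, a $K(\gX)$-point of $\gX$ is represented, étale-locally, by a $K(\gX)$-point of $\gU$ modulo the $B$-action, which is exactly a trivialization of the associated $G/B$-bundle over $\Spec K(\gX)$ and hence a $B$-reduction. No step in the argument should pose a serious obstacle.
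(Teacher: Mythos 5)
Your proof is correct, and it takes a genuinely different (and somewhat more direct) route than the paper's.

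The paper argues as follows: it considers the composite $\Spec K(\gU)\to\gU\to U/G$ and observes that, since $\gU\to\gX$ is a $B$-torsor, $K(\gU)$ is purely transcendental over $K(\gX)$, so the restriction $H^3(K(\gX),2)\to H^3(K(\gU),2)$ is injective; then it uses that $\gU$ trivializes over $K(\gU)$ (because the generic point of $\gU$ furnishes a rational point of $\gU_{K(\gU)}$) and that $a$ is normalized to conclude $a(\gU)$ dies over $K(\gU)$, hence already over $K(\gX)$. Your argument cuts out the passage to $K(\gU)$: you observe that the generic point of $\gX=\gU/B$ is itself a reduction of the structure group of $\gU_{K(\gX)}$ from $G$ to $B$, and since $H^1(L,B)=1$ for every field $L$ (via $1\to U\to B\to T\to 1$, additive and multiplicative Hilbert 90), the resulting $B$-torsor $P$ is trivial, so $\gU_{K(\gX)}\simeq P\times^B G$ is already a trivial $G$-torsor and $a(\gU)|_{K(\gX)}=a(\gU_{K(\gX)})=0$. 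What your version buys is that it avoids invoking injectivity of $H^3(-,2)$ under purely transcendental extensions; in fact, the triviality of $P$ is precisely the reason $K(\gU)/K(\gX)$ is purely transcendental in the paper's argument, so you are simply using that fact one step earlier, before it is weakened to a statement about function fields. One minor technical caveat you did handle: the exact sequence of pointed sets $H^1(L,U)\to H^1(L,B)\to H^1(L,T)$ together with triviality of the two outer terms does give $H^1(L,B)=1$, since exactness at the middle identifies the fiber over the base point of $H^1(L,T)$ with the image of $H^1(L,U)$, and that fiber is all of $H^1(L,B)$.
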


\begin{proof}
Consider the composite $q\colon\Spec K(\gU)\to \gU \to U/G$. Observe that the pullback $q^*$ factors as
\[
q^*\colon H^0_{\text{Zar}}(U/G,\sH^3(2))\to H^3(K(\gX),2)\to H^3(K(\gU),2).
\]
Since $\gU\to \gX$ is a $B$-torsor, $K(\gU)$ is purely transcendental over $K(\gX)$, so the last map of the composite is injective. Since the $\gU$ becomes trivial over $K(\gU)$, we have $q^*(a(\gU))=a(\gU\times_K K(\gU))=0$. Therefore, $a(\gU)\in\ker[H^3(K,2)\to H^3(K(\gX),2)]$.
\end{proof}

\begin{lem}\label{trivact}
Let $Y\to\Spec L$ be a $G$-torsor and $X=Y/B$. Let $L^{sep}$ denote the separable closure of $L$, $\Gamma_L$ its Galois group and $X^{sep}=X\times_L L^{sep}$. Then the $\Gamma_L$ action on $\Pic X^{sep}$ is trivial.
\end{lem}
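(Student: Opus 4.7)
\medskip

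\textbf{Proof plan.} The plan is to exhibit a $\Gamma_L$-equivariant surjection $T^* \twoheadrightarrow \Pic X^{sep}$ in which $T^*$ carries the \emph{trivial} Galois action; the triviality of the action on $\Pic X^{sep}$ is then immediate. The map is the associated bundle construction: for each character $\chi \colon B \to \Gm$, the $B$-torsor $Y \to Y/B = X$ yields a line bundle $\mathcal{L}_\chi := Y \times^B \Aone_{\chi}$ on $X$, and since the unipotent radical of $B$ has no nontrivial characters we may identify $X^*(B) = T^*$. This defines a homomorphism $\phi \colon T^* \to \Pic X$ over $L$, compatible with arbitrary base change.

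First I would check that the base-changed map $\phi^{sep} \colon T^* \to \Pic X^{sep}$ is surjective (indeed an isomorphism). Because $G$ is split semisimple, $H^1(L^{sep},G)=1$, so $Y^{sep}$ is a trivial $G$-torsor and $X^{sep} \simeq (G/B)_{L^{sep}}$. Under this identification $\phi^{sep}$ becomes the classical isomorphism $X^*(B) \xrightarrow{\simeq} \Pic(G/B)$ coming from the associated line bundle construction on the split flag variety.

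Next I would use functoriality of the construction with respect to base change. For every $\sigma \in \Gamma_L$ and every $\chi \in T^*$, one has $\sigma^*\bigl(\phi^{sep}(\chi)\bigr) = \phi^{sep}(\sigma\cdot\chi)$. Since $G$, and hence $T$ and $B$, are split over the ground field $F \subseteq L$, the character group $T^*$ is a constant $\Gamma_L$-module, so $\sigma\cdot\chi = \chi$. Hence $\sigma^*(\phi^{sep}(\chi)) = \phi^{sep}(\chi)$ for every $\chi$, and the surjectivity of $\phi^{sep}$ from the previous step shows that $\Gamma_L$ acts trivially on all of $\Pic X^{sep}$.

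The main obstacle is really a bookkeeping one: making sure that the associated bundle construction is genuinely defined over $L$ and that after base change it coincides with the Borel--Weil description of $\Pic(G/B)$, so that the equivariance argument applies. Once $\phi^{sep}$ is recognized as the standard isomorphism $T^* \simeq \Pic(G/B)$, the conclusion is automatic from the fact that $T^*$ is Galois-fixed.
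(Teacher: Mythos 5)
Your strategy is a reasonable self-contained alternative to the paper's proof, which simply cites \cite[Prop.~2.2]{MT}. However, there is a genuine gap: the claim that $\phi^{sep}\colon T^*\to\Pic X^{sep}$ is ``the classical isomorphism $X^*(B)\simeq\Pic(G/B)$'' is false unless $G$ is simply connected. One has $G/B=\widetilde G/\widetilde B$ for the simply connected cover $\widetilde G$, so $\Pic(G/B)\cong X^*(\widetilde B)=X^*(\widetilde T)=\Lambda$, the full weight lattice, whereas the source of your map is $X^*(B)=X^*(T)=T^*$, a proper finite-index sublattice of $\Lambda$ whenever $G$ is not simply connected (the quotient being $\Lambda/T^*\cong C^*$). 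The lemma is applied in the paper to arbitrary split semisimple $G$, including adjoint and intermediate quotients, so the surjectivity you invoke genuinely fails there; as written, your argument only shows that $\Gamma_L$ fixes the sublattice $\phi^{sep}(T^*)\subseteq\Pic X^{sep}$ pointwise.

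The gap is easily closed. Over $L^{sep}$ the map $\phi^{sep}$ identifies with the inclusion $T^*\hookrightarrow\Lambda$ (in particular it is injective: the kernel of $X^*(B)\to\Pic(G/B)$ consists of characters extending to $G$, and $X^*(G)=0$ for $G$ semisimple), so its image has finite index, and $\Pic X^{sep}\cong\Lambda$ is torsion-free. Given $\sigma\in\Gamma_L$ and $\mu\in\Pic X^{sep}$, pick $n\geq 1$ with $n\mu\in\phi^{sep}(T^*)$; your equivariance computation gives $\sigma(n\mu)=n\mu$, whence $n\bigl(\sigma(\mu)-\mu\bigr)=0$, and torsion-freeness forces $\sigma(\mu)=\mu$. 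With that extra observation the proof is correct and furnishes a clean direct argument in place of the citation.
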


\begin{proof}
It follows by \cite[Prop.~2.2]{MT}.
\end{proof}

\subsubsection*{The Tits map}

Consider a short exact sequence of $F$-group schemes
\[
1\to C\to\sG \stackrel{\pi}\to G\to 1.
\]
Given a character $\chi\in C^*$ of the center and a field extension $L/F$ consider the {\em Tits map}~\cite[\S4,5]{Ti71}
\[
\alpha_{\chi,L}\colon H^1(L,G)\xrightarrow{\partial} H^2(L,C)\xrightarrow{\chi_*} H^2(L,\Gm),
\]
where $\partial$ is the connecting homomorphism (if $C$ is non-smooth, we replace it by $\Gm$ and $G$ by the respective push-out as in~\cite[II, Example 2.1]{GMS}). This gives rise to a cohomological invariant $\beta_\chi$ of degree two
\[
\beta_{\chi}\colon Y\mapsto\alpha_{\chi,L}(Y)\quad\text{ for every }G\text{-torsor }Y\in H^1(L,G).
\]
\cite[Theorem~2.4]{BM} shows that the assignment $\chi\to\beta_\chi$ provides an isomorphism $C^*\to \Inv^2(G,1)$.

For a $G$-torsor $Y$ over $L$ there is an exact sequence studied in~\cite{Merkurjev95},~\cite{Peyre} and~\cite[II, Thm. 8.9]{GMS}:
\[
A^1((Y/B)^{sep},K_2)^{\Gamma}\xrightarrow{\rho}\ker[H^3(L,2)\to H^3(L(Y/B),2)]\xrightarrow{\delta_Y}\CH^2(Y/B).
\]
The multiplication map $L^{sep}\otimes\CH^1(Y/B)^{sep}\to A^1((Y/B)^{sep},K_2)$ is an isomorphism. By lemma~\ref{trivact} we obtain an exact sequence
\begin{equation}\label{sequence}
L\otimes\Lambda\xrightarrow{\rho_Y}\ker[H^3(L,2)\to H^3(L(Y/B),2)]\xrightarrow{\delta_Y}\CH^2(Y/B).
\end{equation}
According to~\cite{Merkurjev95} the map $\rho_Y$ acts as follows: 
\[
\rho_Y(\phi\otimes\lambda)=\phi\cup\beta_{\overline{\lambda}}(Y),\quad\text{ where }\phi\in L^{\times},\; \lambda\in\Lambda\text{ and}
\]
$\overline{\lambda}$ denotes the image of $\lambda$ in $\Lambda/T^*=C^*$.

\medskip

We define the subgroup $\Inv^3(G,2)_\sdec$ of {\em semi-decomposable} invariants as follows:

\begin{dfn}
An invariant $a \in \Inv^3(G,2)_\norm$ is called semi-decomposable, if there is a finite set $b_i\in \Inv^2(G,1)_\norm$ such that for every field extension $L/F$ and a torsor $Y\in H^1(L,G)$ we have
\[
a(Y)=\sum_i \phi_i\cup b_i(Y)\text{ for some }\phi_i\in L^{\times}.
\]
\end{dfn}
Observe that by definition, we have
\[
\Inv^3(G,2)_\dec\subseteq \Inv^3(G,2)_\sdec\subseteq \Inv^3(G,2)_\norm
\]
and $a\in \Inv^3(G,2)_\sdec$ if and only if $a(Y)\in\im(\rho_Y)=\ker(\delta_Y)$ for every torsor $Y$.

\begin{lem}\label{semi} 
We have $a\in\Inv^3(G,2)_\sdec$ if and only if $a(\gU)\in \ker(\delta_{\gU})$.
\end{lem}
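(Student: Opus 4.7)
The forward direction will be immediate: if $a \in \Inv^3(G,2)_\sdec$, then by the observation just before the statement, $a(Y) \in \im(\rho_Y) = \ker(\delta_Y)$ for every $G$-torsor $Y$; specialising to $Y = \gU$ gives $a(\gU) \in \ker(\delta_{\gU})$.

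For the converse I would start by applying exactness of \eqref{sequence} at $\gU$ to the hypothesis $a(\gU) \in \ker(\delta_{\gU})$, obtaining a decomposition
\[
a(\gU) = \sum_i \phi_i \cup \beta_{\overline{\lambda_i}}(\gU)
\]
with finitely many $\phi_i \in K^\times$ and $\lambda_i \in \Lambda$. The strategy is then to transport this identity to every $G$-torsor $Y$ over every field extension $L/F$, which by the same observation reduces semi-decomposability of $a$ to showing $a(Y) \in \ker(\delta_Y)$ for every such $Y$.

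To carry this out, I would first shrink $U$ to a smaller $G$-invariant open subset (preserving $U(F) \neq \emptyset$ so that the classifying/versal property is retained) over whose quotient all of the rational functions $\phi_i$ become regular units. The displayed identity then lifts to an identity of unramified classes in $H^0_{\mathrm{Zar}}(U/G, \sH^3(2))$, via the embedding $\Inv^3(G,2) \hookrightarrow H^0_{\mathrm{Zar}}(U/G, \sH^3(2))$ recalled just before Lemma~\ref{theta} together with the analogous fact for invariants of degree $2$. Given $Y$ over $L$, the versal property provides an $L$-point $y \colon \Spec L \to U/G$ whose fibre is isomorphic to $Y$; pulling the lifted identity back along $y$ produces
\[
a(Y) = \sum_i y^*(\phi_i) \cup \beta_{\overline{\lambda_i}}(Y)
\]
with each $y^*(\phi_i) \in L^\times$, and hence $a(Y) \in \im(\rho_Y) = \ker(\delta_Y)$ as required.

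The main obstacle will be justifying the passage from a generic identity in $H^3(K,2)$ to one holding as unramified classes on an open subset of $U/G$, so that $L$-pullback is well-defined and commutes with the cup-product. This rests on the Bloch-Ogus-Gabber Gersten resolution (already invoked in the excerpt), which allows one to control the ramification loci of the $\phi_i$ and shrink the classifying space accordingly, together with the standard fact that pullback of unramified classes along points of the classifying space agrees with evaluation of the corresponding cohomological invariant on the fibre torsor.
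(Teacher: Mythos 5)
Your proposal is correct in outline but takes a genuinely different route from the paper. You prove semi-decomposability \emph{directly}: take the specific decomposition $a(\gU)=\sum_i\phi_i\cup\beta_{\overline{\lambda_i}}(\gU)$ over the generic point, spread it out to an identity of unramified classes over a smaller open $W\subseteq U/G$, and then pull that identity back along an $L$-point of $W$ lying over the given torsor $Y$. The paper instead proves semi-decomposability \emph{indirectly} through the characterization $a\in\Inv^3(G,2)_\sdec\iff\delta_Y(a(Y))=0$ for all $Y$: fixing $Y$, it completes the local ring of $(U/G)_L$ at a point $y$ with fiber $Y$, observes that $Y_{\hat K}\simeq\gU_{\hat K}$ over the fraction field $\hat K$ of the completion (via Grothendieck's theorem on torsors over complete local rings), and concludes $\delta_Y(a(Y))_{\hat K}=\delta_{\gU}(a(\gU))_{\hat K}=0$; then the restriction $\CH^2(Y/B)\to\CH^2((Y/B)_{\hat K})$ is split injective by the specialization map, so $\delta_Y(a(Y))=0$. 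The paper's argument sidesteps the spreading-out step entirely by working with the single Chow class $\delta_Y(a(Y))$ rather than transporting a multi-term decomposition of $a(\gU)$; your approach is closer to the definition of semi-decomposability and actually exhibits the decomposition of $a(Y)$, at the cost of the Bloch--Ogus machinery you rightly flag.

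One omission in your write-up that is present in the paper: to guarantee that the smaller open $W$ still contains an $L$-point with fiber $Y$ (the versal property over $W$), one must first assume $L$ infinite, replacing $L$ by $L(t)$ if needed and descending along the injective restriction $H^3(L,2)\hookrightarrow H^3(L(t),2)$. Merely preserving $U(F)\neq\emptyset$ does not by itself ensure that a given torsor over a finite field $L$ arises as a fiber over a point of the shrunken open.
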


\begin{proof}
If $a$ is a semi-decomposable invariant, then $a(\gU)=\sum_{\chi\in C^*} \phi_\chi\cup\beta_\chi(\gU)$ lies in the image of $\rho_{\gU}$, hence, $\delta_{\gU}(a(\gU))=0$. On the other hand, let $a$ be a degree $3$ invariant such that $\delta_{\gU}(a(\gU))=0$ and let $Y$ be a $G$-torsor over a field extension $L/F$. We show that $\delta_Y(a(Y))=0$. 

We may assume that $L$ is infinite (replacing $L$ by $L(t)$ if needed). Choose a rational point $y\in (U/G)_L$ such that $Y$ is isomorphic to the fiber of $U\to U/G$ over $y$. Let $R$ be the completion of the regular local ring $\Os_{(U/G)_L,y}$ and let $\hat K$ be its quotient field. The ring $R$ is a regular local ring with residue field $L$. By the theorem of Grothendieck $Y_R$ is a pullback of $Y$ via the projection $\Spec R\to \Spec L(y)$. Then the $G$-torsors $Y_{\hat K}$ and $\gU_{\hat K}$ over $\hat K$ are isomorphic. We have
\[
\delta_Y(a(Y))_{\hat K}=\delta_{Y_{\hat K}}(a(Y_{\hat K}))=\delta_{\gU_{\hat K}}(a(\gU_{\hat K}))=\delta_{\gU}(a(\gU))_{\hat K}=0.
\]
The restriction $\CH^2(Y/B)\to \CH^2((Y/B)_{\hat K})$ is injective, since it is split by the specialization map with respect to a system of local parameters of $R$. Therefore, $\delta_Y(a(Y))=0$ for every $Y$, hence, $a$ is semi-decomposable.
\end{proof}

Now we are ready to prove the first part of the main theorem:

\begin{thm}\label{exactseq}
The map $\delta_{\gU}$ induces a short exact sequence 
\[
0\longrightarrow \tfrac{\Inv^3(G,2)_\sdec}{\Inv^3(G,2)_\dec} \longrightarrow \Inv^3(G,2)_\ind\xrightarrow{g}\CH^2(\gX)_\tors\longrightarrow 0,
\]
and there is  a group isomorphism
\[
\tfrac{\Inv^3(G,2)_\sdec}{\Inv^3(G,2)_\dec}\simeq \tfrac{c_2((\ssI^W)\cap\Z[T^*])}{c_2(\Z[T^*]^W)}.
\]
\end{thm}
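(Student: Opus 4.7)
The plan is to define $g\colon \Inv^3(G,2)_\ind \to \CH^2(\gX)$ by the rule $g([a]) = \delta_{\gU}(a(\gU))$ for a normalized representative $a$, where $\delta_{\gU}$ comes from the exact sequence~\eqref{sequence} applied to the versal torsor $Y=\gU$. By Lemma~\ref{theta}, $a(\gU)$ sits in the domain of $\delta_{\gU}$; and a decomposable invariant $\phi\cup b$ evaluates at $\gU$ to an element of $\im\rho_{\gU}=\ker\delta_{\gU}$, so $g$ descends to the quotient $\Inv^3(G,2)_\ind$. Lemma~\ref{semi} then directly identifies $\ker g$ with $\Inv^3(G,2)_\sdec/\Inv^3(G,2)_\dec$.

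Next I verify that $g$ takes values in $\CH^2(\gX)_\tors$. Picking a finite splitting field $L/K$ of $\gU$, the normalization of $a$ forces $a(\gU_L)=0$ in $H^3(L,2)$, and a restriction-corestriction argument then gives $[L:K]\cdot a(\gU)=0$ in $H^3(K,2)$. Applying $\delta_{\gU}$ yields $[L:K]\cdot g(a)=0$, so $g(a)$ is torsion.

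The main obstacle is surjectivity of $g$ onto $\CH^2(\gX)_\tors$. The strategy is to match both sides through the presentations developed earlier in Section~\ref{direct}: Corollary~\ref{chowtwo} identifies $\CH^2(\gX)_\tors$ with $\Sym^2(T^*)^W/\SDec(G)$, and for each $W$-invariant class represented by $q\in\Sym^2(T^*)^W$ I would produce a cohomological invariant whose image under $g$ is $[q]$. The construction of such an invariant goes through Merkurjev's description of $\Inv^3(G,2)_\norm$ in \cite{Merkurjev}, and the verification that $g$ indeed returns $[q]$ amounts to tracking $q$ simultaneously through the characteristic map $\cs^{\tCH}$, the second Chern class, and the boundary $\delta_{\gU}$; here the indeterminacy of the lift precisely matches $\SDec(G)$ thanks to Lemma~\ref{kernel}.

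For the final isomorphism $\Inv^3(G,2)_\sdec/\Inv^3(G,2)_\dec \simeq c_2((\ssI^W)\cap\Z[T^*])/c_2(\Z[T^*]^W)$, a semi-decomposable invariant $a=\sum \phi_i\cup b_i$ evaluated at $\gU$ has the form $\sum \phi_i\cup\beta_{\chi_i}(\gU)=\rho_{\gU}\bigl(\sum \phi_i\otimes \lambda_i\bigr)$, by \cite[Thm.~2.4]{BM} and the formula for $\rho_Y$ following~\eqref{sequence}, where $\lambda_i\in\Lambda$ lifts $\chi_i\in C^*=\Lambda/T^*$. Passing to $c_2$-classes via the characteristic maps introduced at the beginning of the section lands one in $c_2((\ssI^W)\cap\Z[T^*])$, while genuinely decomposable invariants contribute exactly $c_2(\Z[T^*]^W)=\Dec(G)$; Lemma~\ref{kernel} then yields the asserted isomorphism.
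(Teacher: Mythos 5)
Your opening moves are sound and match the paper's intent: defining $g$ on a normalized representative by $g([a]) = \delta_{\gU}(a(\gU))$, using Lemma~\ref{theta} to place $a(\gU)$ in the domain of $\delta_{\gU}$, and invoking Lemma~\ref{semi} to identify $\ker g$ with $\Inv^3(G,2)_\sdec/\Inv^3(G,2)_\dec$. Your restriction-corestriction argument that $g$ lands in torsion is a clean shortcut that the paper does not spell out in that form, and it is correct.

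However, the two remaining claims --- surjectivity of $g$ and the identification of $\Inv^3(G,2)_\sdec/\Inv^3(G,2)_\dec$ with $c_2((\ssI^W)\cap\Z[T^*])/c_2(\Z[T^*]^W)$ --- are not actually proven; they are announced as things you ``would produce'' or that ``amount to tracking.'' The crux of the theorem is precisely this: under the isomorphisms $\Inv^3(G,2)_\ind \simeq \Sym^2(T^*)^W/\Dec(G)$ (from \cite[3.9]{Merkurjev}) and $\CH^2(\gX)_\tors \simeq \Sym^2(T^*)^W/\SDec(G)$ (Corollary~\ref{chowtwo}), the map $g$ becomes the \emph{natural projection}. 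Once this commutativity is established, surjectivity is immediate (since $\Dec(G)\subseteq\SDec(G)$), and the kernel is tautologically $\SDec(G)/\Dec(G) = c_2((\ssI^W)\cap\Z[T^*])/c_2(\Z[T^*]^W)$. You never verify this compatibility, and nothing in your proposal forces it. The paper proves it by assembling the three rows of the exact sequence from Kahn's theorem \cite[Thm.~1.1]{Kahn} for $U/G$, $U/B$ and $\gX$, chasing the diagram to produce $g$ as a map $\Inv^3(G,2)_\norm \to \CH^2(U/B)/\CH^2(U/G) \to \CH^2(\gX)$, and then matching this construction against $\delta_{\gU}$ via the proof of Kahn's theorem in \cite[II,\,\S8]{GMS}. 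That diagram is the engine that makes the characteristic-map picture and the invariant-theoretic picture talk to each other, and it is entirely absent from your write-up.

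Your last paragraph is also off-target: showing that a semi-decomposable $a$ evaluates on $\gU$ into $\im\rho_{\gU}$ only re-proves that $g(a)=0$ --- it does not produce a map into $c_2((\ssI^W)\cap\Z[T^*])/c_2(\Z[T^*]^W)$, let alone an isomorphism. The correct map is the composite $\Inv^3(G,2)_\sdec/\Inv^3(G,2)_\dec \hookrightarrow \Inv^3(G,2)_\ind \xrightarrow{\simeq} \Sym^2(T^*)^W/\Dec(G)$, landing in the kernel of the projection to $\Sym^2(T^*)^W/\SDec(G)$; the isomorphism with $\SDec(G)/\Dec(G)$ then falls out of the exact sequence. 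Without the commutative square relating $g$ to that projection, you cannot conclude either direction.
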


\begin{proof}
Consider the following diagram. The rows are exact sequences given by~\cite[Thm.~1.1]{Kahn} and vertical arrows are pullbacks:
\[
\xymatrix{
0\ar[r] & \CH^2(U/G)\ar[r]\ar[d] & \mathbb{H}^4_{\text{\'et}}(U/G,\Z(2))\ar[r]\ar[d] & H^0_{\text{Zar}}(U/G,\sH^3(2))\ar[r]\ar[d] & 0\\
0\ar[r] & \CH^2(U/B)\ar[r]\ar[d]_{\imath^{\tCH}} & \mathbb{H}^4_{\text{\'et}}(U/B,\Z(2))\ar[r]\ar[d] & H^0_\text{Zar}(U/B,\sH^3(2))\ar[r]\ar[d] & 0\\
0\ar[r] & \CH^2(\gX)\ar[r] & \mathbb{H}^4_{\text{\'et}}(\gX,\Z(2))\ar[r] & H^0_\text{Zar}(\gX,\sH^3(2))\ar[r] & 0
}
\]
Since $F(U/B)=K(\gX)$, lemma~\ref{theta} implies that the composite 
\[
\Inv^3(G,2)_\norm\to H^0_\text{Zar}(U/G,\sH^3(2))\to
H^0_\text{Zar}(U/B,\sH^3(2))
\] 
is zero. By the diagram chase there is a homomorphism 
\[ 
\Inv^3(G,2)_\norm\to\CH^2(U/B)/\CH^2(U/G).
\] 
The map $\gX\to U/B\to U/G$ factors as $\gX\to\Spec K\to U/G$, hence
the composite of pullbacks
$\CH^2(U/G)\to\CH^2(U/B)\xrightarrow{\imath^{\tCH}}\CH^2(\gX)$ coincides with the composite $\CH^2(U/G)\to\CH^2(\Spec K)\to\CH^2(\gX)$ which is
zero. 
This gives a homomorphism $g\colon \Inv^3(G,2)_\norm\to
\CH^2(U/B)/\CH^2(U/G) \to \CH^2(\gX)$ which
by the proof of theorem of B. Kahn (see~\cite[II,\,\S 8, 8.1-8.5]{GMS}) factors through the map $\delta_{\gU}$
of~\eqref{sequence}.
By~\cite[3.9]{Merkurjev} the map $g$ also factors through $\Inv^3(G,2)_\ind\xrightarrow{\simeq}
\tfrac{\Sym^2(T^*)^W}{\Dec(G)}$. So there is a commutative diagram
\begin{equation}\label{maindiagr}
\xymatrix{
\Inv^3(G,2)_\norm\ar[r]^{g}\ar[d] & \CH^2(\gX)_\tors\\
\tfrac{\Sym^2(T^*)^W}{\Dec(G)}\ar[r] &
\tfrac{\Sym^2(T^*)^W}{\SDec(G)}\ar[u]^{\simeq}_{\text{Cor. }\ref{chowtwo}}
}.
\end{equation}
The bottom row of~\eqref{maindiagr} gives a short exact sequence 
\[
0\to \tfrac{\SDec(G)}{\Dec(G)}\to \tfrac{\Sym^2(T^*)^W}{\Dec(G)}\to\CH^2(\gX)_\tors\to 0.
\] 
Lemma~\ref{semi} and composite~\eqref{sequence} give an exact sequence
\[
0\to \Inv^3(G,2)_\sdec\to \Inv^3(G,2)_\norm\xrightarrow{g}\CH^2(\gX)_\tors.
\]
Combining these together and factoring modulo $\Inv^3(G,2)_\dec$ we
obtain an isomorphism
\[
\tfrac{\Inv^3(G,2)_\sdec}{\Inv^3(G,2)_\dec}\cong \tfrac{\SDec(G)}{\Dec(G)}. \qedhere
\]
\end{proof}

\section{Semi-decomposable invariants vs. decomposable invariants}\label{coincidence}

In this section we prove case by case that the groups of decomposable
$\Inv^3(G,2)_\dec$ and semi-decomposable $\Inv^3(G,2)_\sdec$
invariants coincide for all split simple~$G$, hence, proving the second
part of our main theorem. 
More precisely, we show that 
\[\Dec(G)=\cs_2(\Z[T^*]^W)=\cs_2((\ssI^W)\cap\Z[T^*])=\SDec(G)\text{ in }\Sym^2(T^*)^W\] 
(here we denote $(\cs^{\tCH})^{-1}\circ \cs_2$ simply by
$\cs_2$). Observe that in the simply connected case
$\Sym^2(\Lambda)^W=\Z q$,  where $q$ corresponds to the normalized
Killing form from \cite[\S1B]{GaZa}, and $\Dec(G)\subseteq \SDec(G)\subseteq \SDec(\sG)=\Dec(\sG)$.

\begin{ex}\label{counterO4} If $G$ is not simple, then $\Dec(G) \neq
  \SDec(G)$ in general. Indeed, consider a quadratic form $q$ of degree 4 with trivial
  discriminant (it corresponds to a $\gSO_4$-torsor). According to
  \cite[Example~20.3]{GMS} there is an invariant given by $q\mapsto \alpha \cup \beta \cup \gamma$, where
  $\alpha$ is represented by $q$ and $\langle\!\langle
  \beta,\gamma\rangle\!\rangle=\langle \alpha\rangle q$ is the 2-Pfister form.
By definition this invariant is semi-decomposable (this fact was pointed to
us by Vladimir Chernousov). Since it is
non-trivial over an algebraic closure of $F$, it is not decomposable.
\end{ex}

\subsection{\it Adjoint groups of type $A_n$ ($n\ge 1$), $B_n$ ($n\geq 2$), $C_n$ ($n\ge 3$, $4\nmid n$), $D_n$ ($n\ge 5$, $4\nmid n$), $E_6$, $E_7$ and special orthogonal groups of type $D_n$ ($n\ge 4$)}

\ 

\medskip 

For classical adjoint types we have
$\Inv^3(G,2)_\norm=\Inv^3(G,2)_\dec$ by~\cite[\S 4b]{Merkurjev}, so we
immediately obtain $\Inv^3(G,2)_\dec=\Inv^3(G,2)_\sdec$. For
exceptional types by~\cite[p.135]{GMS} and~\cite[\S4b]{Merkurjev}  we
have $\Dec(G)=\Dec(\sG)=6\Z q$ for $E_6$ and $\Dec(G)=\Dec(\sG)=12\Z
q$ for $E_7$. For special orthogonal groups $G=\gSO_{2n}$
by~\cite[\S15]{GMS} we have $\Dec(\gSO_{2n})=\Dec(\gSpin_{2n})=2\Z
q$ (here $\tilde G=\gSpin_{2n}$), hence, $\Dec(G)=\SDec(G)$.

\subsection{\it Non-adjoint groups of type $A_{n-1}$ ($n\ge 4$)}\label{nonadjA}

\ 

\medskip

Let $p$ be a prime integer and $G=\gSL_{p^s}/\gmu_{p^r}$ for some integers $s\geq r>0$. If $p$ is odd, we set $k=\min\{r, s-r\}$ and if $p=2$ we assume that $s\geq r+1$ and set $k=\min\{r, s-r-1\}$.
It is shown in \cite[\S4]{BR} that the group $\Inv^3(G,2)_\ind$ is cyclic of order $p^k$. On the other hand, by~\cite[Example~4.15]{Ka98} if $X$ is the
Severi-Brauer variety of a generic algebra $A^{\mathrm{gen}}$, then $\CH^2(X)_\tors$ is also a cyclic group of order $p^k$. The canonical morphism $\gX \to X$ is an iterated projective bundle, hence, $\CH^2(\gX)_\tors\simeq \CH^2(X)_\tors$ is a cyclic group of order $p^k$. It follows from the exact sequence of theorem~\ref{exactseq} that $\Inv^3(G,2)_\sdec=\Inv^3(G,2)_\dec$.

\medskip

More generally, let $G=\gSL_n/\gmu_m$, where $m\mid n$. Let $p^s$ and $p^r$ be the highest powers of a prime integer $p$ dividing $n$ and $m$ respectively. Consider the canonical homomorphism $H=\gSL_{p^s}/\boldsymbol{\mu}_{p^r}\to G$. We claim that it induces an isomorphism between the $p$-primary component of $\Inv^3(G,2)_\ind$ and the group $\Inv^3(H,2)_\ind$. 

\medskip

Indeed, let $H'=\gSL_{n}/\gmu_{p^r}$. It follows from \cite[Theorem 4.1]{BR} that the natural homomorphism $\Inv^3(H',2)_\ind\to \Inv^3(H,2)_\ind$ is an isomorphism. Thus, it suffices to show that the pull-back map for the canonical surjective homomorphism $H'\to G$ with kernel $\gmu_t$, where $t:=m/p^r$ is relatively prime to $p$, induces an isomorphism between the $p$-primary component of $\Inv^3(G,2)_\ind$ and $\Inv^3(H',2)_\ind$. Let $\Lambda\subset \Lambda'$ be the character groups of maximal tori of $G$ and $H'$ respectively. The factor group $\Lambda'/\Lambda$ is isomorphic to $\gmu_t^*=\Z/t\Z$. Since the functor $\Lambda\mapsto \tfrac{\Sym^2(\Lambda)^W}{\Dec(\Lambda)}$ is quadratic in $\Lambda$, the kernel and the cokernel of the homomorphism
\[
\Inv^3(G,2)_\ind=\tfrac{\Sym^2(\Lambda)^W}{\Dec(\Lambda)}\to \tfrac{\Sym^2(\Lambda')^W}{\Dec(\Lambda')}=\Inv^3(H',2)_\ind
\]
are killed by $t^2$. As $t$ is relatively prime to $p$, the claim follows.

\medskip

Since the $p$-primary component of $\CH(\gX)_\tors$ and the group $\CH(\gX_H)_\tors$ are isomorphic by \cite[Prop.~1.3]{Ka98} (here $\gX_H$ denotes the versal flag for $H$), we obtain that $\Inv^3(G,2)_\ind\simeq \CH(\gX)_\tors$ and, therefore, by the exact sequence of theorem~\ref{exactseq} $\Inv^3(G,2)_\sdec=\Inv^3(G,2)_\dec$.

\subsection{\it Adjoint groups of type $C_{4m}$ $(m\geq 1)$}\label{Cn}

\ 

\medskip

By~\cite[\S4b]{Merkurjev} we have $\Sym^2(T^*)^W=\Z q$ and $\Dec(G)=\cs_2(\Z [T^*]^W)=2\Z q$. We want to show that $\cs_2(x)\in 2\Z q$ for every element $x\in(\ssI^W)\cap\Z[T^*]$.

\medskip

Given a weight $\chi\in \Lambda$ we denote by $W(\chi)$ its $W$-orbit and we define $\widehat{e^{\chi}}:=\sum_{\lambda\in W(\chi)}(1-e^{-\lambda})$. By definition, the ideal $(\ssI^W)$ is generated by elements $\{\widehat{e^{\omega_i}}\}_{i=1..4m}$ corresponding to the fundamental weights $\omega_i$. An element $x$ can be written as 
\begin{equation}\label{alphaid}
x=\sum_{i=1}^{4m}n_i\widehat{e^{\omega_i}}+\delta_i\widehat{e^{\omega_i}},\quad\text{ where }n_i\in \Z\text{ and }\delta_i\in\ssI.
\end{equation}

Similar to \cite[\S3]{Za} consider a ring homomorphism $f\colon\Z[\Lambda]\to\Z[\Lambda/T^*]$ induced by taking the quotient $\Lambda\to\Lambda/T^*=C^*$. We have $\Lambda/T^*\simeq \Z/2\Z$ and $\Z[\Lambda/T^*]=\Z[y]/(y^2-2y)$, where $y=f(e^{\omega_1}-1)$. Observe that $C^*$ is $W$-invariant.

\medskip

By definition, $f(I)=0$, so $f(x)=0$. Since $\omega_i\in T^*$ for all even $i$, $f(\widehat{e^{\omega_i}})=y$ for all odd $i$ and $f(\delta_i)\in f(\ssI)=(y)$, we get 
\[
0=f(x)=\sum_{i \text{ is odd}}n_i d_i y+ m_id_i y^2=(\sum_{i \text{ is odd}}n_i + 2 m_i)d_iy,
\]
where $m_i\in \Z$ and $d_i=2^i{4m \choose i}$ is the cardinality of $W(\omega_i)$, which implies that $(\sum_{i \text{ is odd}}n_i + 2 m_i)d_i=0$. Dividing this sum by the g.c.d. of all $d_i$'s and taking the result modulo 2 (here one uses the fact $\tfrac{n}{g.c.d.(n,k)}\mid \tbinom{n}{k}$), we obtain that the coefficient $n_1$ in the presentation~\eqref{alphaid} has to be even.

\medskip

We now compute $\cs_2(x)$. Let $\Lambda=\Z e_1\oplus\ldots\oplus\Z e_{4m}$. The root lattice is given by $T^*=\{\sum a_ie_i\mid \sum a_i \text{ is even}\}$ and
\[
\omega_1=e_1,\;\omega_2=e_1+e_2,\;\omega_3=e_1+e_2+e_3,\ldots,\omega_{4m}=e_1+\ldots +e_{4m}.
\]
By \cite[\S 2]{GaZa} we have $\cs_2(x)=\sum_{i=1}^{4m} n_i \cs_2(\widehat{e^{\omega_i}})$ and $\cs_2(\widehat{e^{\omega_i}})=N(\widehat{e^{\omega_i}})q$, where 
\[
N(\sum a_je^{\lambda_j})=\tfrac{1}{2}\sum a_j\langle\lambda_j,\alpha^{\vee}\rangle^2\text{ for a fixed long root }\alpha.
\]
If we set $\alpha=2e_{4m}$, then $\langle \lambda,\alpha^{\vee}\rangle =(\lambda,e_{4m})$ and
\[
N(\widehat{e^{\omega_i}})=\tfrac{1}{2}\sum_{\lambda\in W(\omega_i)}\langle\lambda,\alpha^{\vee}\rangle^2=\tfrac{1}{2}\sum_{{\lambda\in W(\omega_i)}}(\lambda,e_{4m})^2=2^{i-1}\tbinom{4m-1}{i-1}
\]
which is even for $i\ge 2$ (here we used the fact that the Weyl group acts by permutations and sign changes on $\{e_1,\ldots,e_{4m}\}$). Since $n_1$ is even, we get that $\cs_2(x)\in 2\Z q$.

\subsection{\it Half-spin and adjoint groups of type $D_{2m}$ ($m\ge 2$)}

\ 

\medskip

We first treat the half-spin group $G=\gHSpin_{8m}$. As in the $C_n$-case all even fundamental weights are in $T^*$ and all odd fundamental weights correspond to a generator of $\Lambda/T^*\simeq \Z/2\Z$. Therefore, the map $f\colon \Z[\Lambda] \to \Z[\Lambda/T^*]$ applied to the element $x=\sum_{i=1}^{2m}n_i\widehat{e^{\omega_i}}+\delta_i\widehat{e^{\omega_i}}$ gives the same equality $(\sum_{i \text{ is odd}}n_i + 2 m_i)d_i=0$, where $m_i\in \Z$, $d_i=2^i\binom{2m}{i}$ for $i\le 2m-2$ and $d_{2m-1}=2^{2m-1}$. Dividing by the g.c.d. of $d_i$'s and taking modulo $2$ we obtain that $n_1$ is even if $m>2$ and $n_1+n_3$ is even if $m=2$.

\medskip

We now compute $\cs_2(x)$. Take a long root $\alpha=e_{2m-1}+e_{2m}$. Then $(\alpha,\alpha)=2$ and $\langle \lambda,\alpha^{\vee}\rangle=(\lambda,e_{2m-1})+(\lambda,e_{2m})$. For $i\le 2m-2$ we have 
\[
N(\widehat{e^{\omega_i}})=\sum_{\lambda\in
W(\omega_i)}(\lambda,e_{2m})^2+(\lambda,e_{2m})(\lambda,e_{2m-1})=\sum_{\lambda\in
W(\omega_i)}(\lambda,e_{2m})^2=
2^i\tbinom{2m-1}{i-1},
\]
and $N(\widehat{e^{\omega_{2m-1}}})=N(\widehat{e^{\omega_{2m}}})=2^{2m-3}$ (here we used the fact that $W$ acts by permutations and even sign changes).

\medskip

Finally, if $m>2$, we obtain $\cs_2(x)=\sum_{i}n_i N(\widehat{e^{\omega_i}})q \in 4\Z q$, where $4\Z q = \Dec(\gHSpin_{4m})$ by~\cite[\S5]{BR}. If $m=2$, then $N(\widehat{e^{\omega_{4}}})=2$, hence, $\cs_2(x) \in 2\Z q$, where $2\Z q=\Dec(\gHSpin_{8})$ again by~\cite[\S5]{BR}.

\medskip

If $m>2$, for the adjoint group $G=\gPGO_{8m}$ by \cite[\S4]{Merkurjev} and the respective half-spin case we obtain
\[
4\Z q=\Dec(\gPGO_{8m})\subseteq \SDec(\gPGO_{8m})\subseteq \SDec(\gHSpin_{8m})=4\Z q.
\]

If $G=\gPGO_8$, direct computations (see \cite{Ne}) show that $\Dec(G)=\SDec(G)$.

\section{Applications}

Observe that $H^3(F,\Z/n\Z(2))$ is the $n$-th torsion part of $H^3(F,2)$ for every $n$ and $H^3(F,\Z/n\Z(2))= H^3(F,\mu_n^{\otimes 2})$
if $\charac(F)$ does not divide $n$.

\subsection{\it Type $C_n$}\label{typec}

Let $G=\gPGSp_{2n}$ be the split projective symplectic group. For a field extension $L/F$, the set $H^1(L,G)$ is identified with the set of isomorphism classes of central simple $L$-algebras $A$ of degree $2n$ with a symplectic involution $\sigma$ (see \cite[\S 29]{Book}). A decomposable invariant of $G$ takes an algebra with involution $(A,\sigma)$ to the cup-product $\phi \cup [A]$ for a fixed element $\phi\in F^\times$. In particular, decomposable invariants of $G$ are independent of the involution.

\medskip

Suppose that $4\mid n$. It is shown in \cite[Theorem 4.6]{Merkurjev} that the group of indecomposable invariants $\Inv^3(G,2)_\ind$ is cyclic of order $2$. If $\charac(F)\neq 2$, Garibaldi, Parimala and Tignol constructed in \cite[Theorem A]{GPT09} a degree $3$ cohomological invariant $\Delta_{2n}$ of the group $G$ with coefficients in $\Z/2\Z$. They showed that if $a\in A$ is a $\sigma$-symmetric element of $A^\times$ and $\sigma'=\operatorname{Int}(a)\circ\sigma$, then
\begin{equation}\label{conj}
\Delta_{2n}(A,\sigma')=\Delta_{2n}(A,\sigma)+\operatorname{Nrp}(a)\cup[A],
\end{equation}
where $\operatorname{Nrp}$ is the pfaffian norm. In particular, $\Delta_{2n}$ does depend on the involution and therefore, the invariant $\Delta_{2n}$ is not decomposable. Hence the the class of $\Delta_{2n}$ in $\Inv^3(G,2)_\ind$ is nontrivial.

\medskip

It follows from (\ref{conj}) that the class $\Delta_{2n}(A)\in \tfrac{H^3(L, \Z/2\Z)}{L^\times\cup [A]}$
of $\Delta_{2n}(A,\sigma)$ depends only on the $L$-algebra $A$ of degree $2n$ and exponent $2$ but not on the involution. Since $\Delta_{2n}(A,\sigma)$ is not decomposable, it is not semi-decomposable by our main theorem. The latter implies that $\Delta_{2n}(A)$ is {\em nontrivial generically}, i.e. there is a central simple algebra $A$ of degree $2n$ over a field extension of $F$ with exponent $2$ such that $\Delta_{2n}(A)\neq 0$. This answers a question raised in \cite{GPT09}. (See \cite[Remark 4.10]{Demba13} for the case $n=4$.)

\subsection{\it Type $A_{n-1}$} Let $G=\gSL_n/\gmu_m$, where $n$ and $m$ are positive integers such that $n$ and $m$ have the same prime divisors and $m\mid n$.
Given a field extension $L/F$ the natural surjection $G\to \gPGL_{n}$ yields a map
\[
\alpha:H^1(L,G)\to H^1(L,\gPGL_{n})\subset \operatorname{Br}(L)
\]
taking a $G$-torsor $Y$ over $L$ to the class of a central simple algebra $A(Y)$ of degree $n$ and exponent dividing $m$. By definition, a decomposable invariant of $G$ is of the form $Y\mapsto \phi \cup [A(Y)]$ for a fixed $\phi \in F^\times$.

\medskip

The map $\gSL_m\to  \gSL_n$ taking a matrix $M$ to the tensor product $M\otimes I_{n/m}$ with the identity matrix, gives rise to a group homomorphism
$\gPGL_m\to G$.
The induced homomorphism (see \cite[Theorem 4.4]{Merkurjev})
\[
\varphi:\Inv^3(G,2)_\norm\to \Inv^3(\gPGL_m,2)_\norm=F^\times/F^{\times m}
\]
is a splitting of the inclusion homomorphism
\[
F^\times/F^{\times m}=\Inv^3(G,2)_\dec\hookrightarrow \Inv^3(G,2)_\norm.
\]

Collecting descriptions of $p$-primary components of $\Inv^3(G,2)_\ind$ (see~\ref{nonadjA}) we get
\begin{equation}\label{an}
\Inv^3(G,2)_\ind\simeq \tfrac{m}{k}\Z q/m\Z q,\quad\text{ where }
k=\left\{
    \begin{array}{ll}
      \gcd(\frac{n}{m},m), & \hbox{if $\frac{n}{m}$ is odd;} \\
      \gcd(\frac{n}{2m},m), & \hbox{if $\frac{n}{m}$ is even.}
    \end{array}
  \right.
\end{equation}

Let $\Delta_{n,m}$ be a (unique) invariant in $\Inv^3(G,2)_\norm$ such that its class in $\Inv^3(G,2)_\ind$ corresponds to $\tfrac{m}{k}q+m\Z q$ and $\varphi(\Delta_{n,m})=0$. Note that the order of $\Delta_{n,m}$ in $\Inv^3(G,2)_\norm$ is equal to $k$. Therefore, $\Delta_{n,m}$ takes values in $H^3(-,\Z/k\Z(2))\subset H^3(-,2)$.

Fix a $G$-torsor $Y$ over $F$ and consider the twists $^Y\! G$ and $\gSL_1(A(Y))$ by $Y$ of the groups $G$ and $\gSL_{n}$ respectively.
The group $F^\times$ acts transitively on the fiber over $A(Y)$ of the map $\alpha$. If $\phi\in F^\times$, we write $^\phi Y$ for the corresponding element in the fiber. By~\eqref{an} the image of $\Delta_{n,m}$ under the natural composition
\[
\Inv^3(G,2)_\norm\simeq \Inv^3(^Y\! G,2)_\norm\longrightarrow \Inv^3(\gSL_1(A(Y)),2)_\norm
\]
is a $\tfrac{m}{k}$-multiple of the Rost invariant. Recall that the
Rost invariant takes the class of $\phi$ in
$F^\times/\Nrd(A(Y)^\times)=H^1(F,\gSL_1(A(Y)))$ to the cup-product
$\phi\cup [A(Y)]\in H^3(F,2)$. So we get
\begin{equation}\label{diff}
\Delta_{n,m}(^\phi Y)-\Delta_{n,m}(Y)\in F^\times\cup \tfrac{m}{k}[A(Y)].
\end{equation}

\medskip

Given a central simple $L$-algebra $A$ of degree $n$ and exponent dividing $m$, we define an element
\[
\Delta_{n,m}(A)\in \tfrac{H^3(L,\Z/k\Z(2))}{L^\times\cup \tfrac{m}{k}[A]}
\]
as follows. Choose a $G$-torsor $Y$ over $L$ with $A(Y)\simeq A$ and set $\Delta_{n,m}(A)$ to be the class of $\Delta_{n,m}(Y)$ in the factor group. It follows from \eqref{diff} that $\Delta_{n,m}(A)$ is independent of the choice of $Y$.

\begin{prop}
Let $A$ be a central simple $L$-algebra of degree $n$ and exponent dividing $m$. Then the order of $\Delta_{n,m}(A)$ divides $k$.
If $A$ is a generic algebra, then the order of $\Delta_{n,m}(A)$ is equal to $k$.
\end{prop}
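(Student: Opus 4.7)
The plan is to split the statement into its two halves. For the divisibility, I use that $\Delta_{n,m}\in\ker\varphi$ together with the fact that $\varphi$ splits the decomposable subgroup off $\Inv^3(G,2)_\norm$, so that the order of $\Delta_{n,m}$ in $\Inv^3(G,2)_\norm$ equals the order of its class $\tfrac{m}{k}q+m\Z q$ in $\Inv^3(G,2)_\ind\simeq\tfrac{m}{k}\Z q/m\Z q$, which is $k$. Hence $k\cdot\Delta_{n,m}(Y)=0$ in $H^3(L,2)$ for every $G$-torsor $Y$, and a fortiori $k\cdot\Delta_{n,m}(A)=0$ in the factor group, proving that the order of $\Delta_{n,m}(A)$ divides $k$.

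For the generic case, I take $A=A(\gU)$ with $\gU$ the versal $G$-torsor over its field of definition $K$, and argue by contradiction: assume $j\cdot\Delta_{n,m}(A)=0$ in the factor group for some $0<j<k$. Then there exists $\phi\in K^\times$ with
\[
j\cdot\Delta_{n,m}(\gU)=\phi\cup\tfrac{m}{k}[A(\gU)].
\]
Via the isomorphism $C^*\simeq\Inv^2(G,1)$ of \cite[Theorem~2.4]{BM} applied to the central extension $1\to\gmu_m\to\gSL_n\to G\to 1$, one identifies $[A(\gU)]=\beta_{\chi_0}(\gU)$ for a suitable generator $\chi_0\in C^*$, and hence $\tfrac{m}{k}[A(\gU)]=\beta_{(m/k)\chi_0}(\gU)$. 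Picking any lift $\lambda\in\Lambda$ of $(m/k)\chi_0\in C^*=\Lambda/T^*$, the description of $\rho_{\gU}$ recalled after~\eqref{sequence} rewrites the right-hand side as $\rho_{\gU}(\phi\otimes\lambda)$, so $j\cdot\Delta_{n,m}(\gU)\in\im\rho_{\gU}=\ker\delta_{\gU}$. By Lemma~\ref{semi} the invariant $j\cdot\Delta_{n,m}$ is then semi-decomposable.

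Next I invoke the second part of the main theorem: since $G=\gSL_n/\gmu_m$ is covered in \S\ref{nonadjA}, the coincidence $\Inv^3(G,2)_\sdec=\Inv^3(G,2)_\dec$ gives $j\cdot\Delta_{n,m}\in\Inv^3(G,2)_\dec$, so its class in $\Inv^3(G,2)_\ind$ vanishes. But by construction that class is $\tfrac{jm}{k}q+m\Z q\in\tfrac{m}{k}\Z q/m\Z q$, which is zero only when $k\mid j$, contradicting $0<j<k$.

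The main obstacle, and the one step that is not purely formal, is the identification of $\tfrac{m}{k}[A(\gU)]$ with the degree-two invariant value $\beta_{(m/k)\chi_0}(\gU)$ and the ensuing recognition of $\phi\cup\tfrac{m}{k}[A(\gU)]$ as an element of $\im\rho_{\gU}$. Once this bridge between the Brauer class and cohomological invariants of degree two is in place, the sharp lower bound on the order follows automatically from the coincidence $\Inv^3(G,2)_\sdec=\Inv^3(G,2)_\dec$ in type $A$ and the explicit computation~\eqref{an} of $\Inv^3(G,2)_\ind$.
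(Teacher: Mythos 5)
Your proof is correct and takes essentially the same approach as the paper's own (much terser) argument: the paper asserts directly that for generic $A$ a proper multiple $k'\Delta_{n,m}$, being not decomposable and hence not semi-decomposable by the main theorem, gives $k'\Delta_{n,m}(A)\neq 0$. You have simply made explicit the step the paper leaves implicit — namely that vanishing of $j\Delta_{n,m}(A)$ in the factor group forces $j\Delta_{n,m}(\gU)\in\im\rho_{\gU}=\ker\delta_{\gU}$ via the identification $[A(\gU)]=\beta_{\chi_0}(\gU)$, whence semi-decomposability by Lemma~\ref{semi}.
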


\begin{proof}
If $k'$ is a proper divisor of $k$, then the multiple $k'\Delta_{n,m}$ is not decomposable.
By our theorem $k'\Delta_{n,m}$ is not semi-decomposable and, hence, $k'\Delta_{n,m}(A)\neq 0$.
\end{proof}

\begin{ex}
Let $A$ be a central simple $F$-algebra of degree $2n$ divisible by $8$ and exponent $2$. Choose a symplectic involution $\sigma$ on $A$.
The group $\gPGSp_{2n}$ is a subgroup of $\gSL_{2n}/\gmu_2$, hence, if $\charac(F)\neq 2$, the restriction of the invariant $\Delta_{2n,2}$ on $\gPGSp_{2n}$
is the invariant $\Delta_{2n}(A,\sigma)$ considered in subsection~\ref{typec}. It follows that $\Delta_{2n,2}(A)=\Delta_{2n}(A)$ in the group
$H^3(F, \Z/2\Z)/(F^\times\cup [A])$.
\end{ex}

The class $\Delta_{n,m}$ is trivial on decomposable algebras:

\begin{prop}\label{decomp}
Let $n_1, n_2,m$ be positive integers such that $m$ divides $n_1$ and $n_2$. Let $A_1$ and $A_2$ be two central simple
algebras over $F$ of degree $n_1$ and $n_2$ respectively and of exponent dividing $m$. Then $\Delta_{n_1n_2,m}(A_1\otimes_F A_2)=0$.
\end{prop}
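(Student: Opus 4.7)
The plan is to show that the pullback of $\Delta_{n_1n_2,m}$ along the tensor-product homomorphism $\psi\colon H:=G_1\times G_2\to G$ vanishes identically, where $G_i=\gSL_{n_i}/\gmu_m$ and $G=\gSL_{n_1n_2}/\gmu_m$. Indeed, once $\psi^*\Delta_{n_1n_2,m}=0$ in $\Inv^3(H,2)_\norm$ is established, choosing $G_i$-torsors $Y_i$ with $A(Y_i)\simeq A_i$ produces a $G$-torsor $\psi(Y_1,Y_2)$ whose associated algebra is $A_1\otimes_F A_2$, and the desired vanishing follows at once (a fortiori in the quotient appearing in the statement).

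The first step is to compute $\psi^*\Delta_{n_1n_2,m}$ modulo $\Inv^3(H,2)_\dec$, using $\Inv^3(H,2)_\ind\simeq\Sym^2(\Lambda_H)^{W_H}/\Dec(H)$. Two integral facts are needed. First, since both Weyl actions are essential, $W_i$-invariant augmented elements lie in the square of the augmentation ideal, and a product of two such elements from distinct factors lies in the third power of the augmentation ideal of $\Z[\Lambda_H]$; its $c_2$-image therefore vanishes in $\Sym^2(\Lambda_H)$, giving $\Dec(H)=\Dec(G_1)\oplus\Dec(G_2)$. Second, the standard representations $V_i$ of $\gSL_{n_i}$ have $c_1(V_i)=0$, so the tensor-product formula for Chern classes simplifies to $c_2(V_1\otimes V_2)=n_2\,c_2(V_1)+n_1\,c_2(V_2)$, which descends to $\psi^*q=n_2q_1+n_1q_2$ in the paper's normalization with $\Dec(G)=m\Z q$ and $\Dec(G_i)=m\Z q_i$. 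Since $\Delta_{n_1n_2,m}$ corresponds to $\tfrac{m}{k}q$ and $k\mid m\mid n_i$, the pullback $\tfrac{m}{k}\psi^*q=\tfrac{mn_2}{k}q_1+\tfrac{mn_1}{k}q_2$ lies in $m\Z q_1\oplus m\Z q_2=\Dec(H)$; hence $\psi^*\Delta_{n_1n_2,m}$ is decomposable on $H$.

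Since $\Inv^2(H,1)=C^*_{G_1}\oplus C^*_{G_2}$, any decomposable invariant of $H$ has the form $(Y_1,Y_2)\mapsto\alpha\cup[A_1(Y_1)]+\beta\cup[A_2(Y_2)]$ for some fixed $\alpha,\beta\in F^\times/F^{\times m}$; in particular $\psi^*\Delta_{n_1n_2,m}$ has this form. To show $\alpha=\beta=0$ I would use the splitting $\varphi(\Delta_{n_1n_2,m})=0$. The homomorphism $M\mapsto M\otimes I_{N/m}\colon\gPGL_m\to G$ (with $N=n_1n_2$) factors through $H$ in two ways, via $M\mapsto(M\otimes I_{n_1/m},1)$ and via $M\mapsto(1,M\otimes I_{n_2/m})$, each composed with $\psi$; both composites agree with $M\mapsto M\otimes I_{N/m}$ up to a permutation of tensor factors, i.e.\ an inner automorphism of $G$, which acts trivially on cohomological invariants. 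Pulling back $\psi^*\Delta_{n_1n_2,m}$ through the first factorization evaluates to $\alpha\cup[-]$ on $\gPGL_m$, whose class in $\Inv^3(\gPGL_m,2)_\norm=F^\times/F^{\times m}$ is $\alpha$; setting this equal to $\varphi(\Delta_{n_1n_2,m})=0$ forces $\alpha=0$, and the second factorization gives $\beta=0$. Hence $\psi^*\Delta_{n_1n_2,m}=0$.

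The main obstacle is the integral bookkeeping in the second paragraph: one must carefully reconcile the normalizations of $q$ on $G_1$, $G_2$, $G$ and their simply-connected covers in order to transfer the tensor-product Chern-class identity to the integral sublattices $\Sym^2(\Lambda_{G_i})^{W_i}$, and verify $\Dec(H)=\Dec(G_1)\oplus\Dec(G_2)$ at the integral level. Once these compatibilities are established, the divisibilities $k\mid m\mid n_i$ make the rest of the argument formal.
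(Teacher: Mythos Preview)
Your approach is essentially the paper's: both pull back $\Delta_{n_1n_2,m}$ along the tensor-product homomorphism $\psi\colon(\gSL_{n_1}/\gmu_m)\times(\gSL_{n_2}/\gmu_m)\to\gSL_{n_1n_2}/\gmu_m$ and use the identity $\psi^*q=n_2q_1+n_1q_2$ together with $m\mid n_i$. The paper stops there, declaring the pullback ``trivial''; you carry the argument one step further, observing that the $\Sym^2$ computation only shows $\psi^*\Delta_{n_1n_2,m}$ is \emph{decomposable} on $H$, and then using the normalization $\varphi(\Delta_{n_1n_2,m})=0$ via the two factorizations of $\gPGL_m\to G$ through $H$ to pin down the decomposable part as zero. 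This extra step is substantive rather than cosmetic: decomposability alone gives $\Delta_{n_1n_2,m}(\psi_*(Y_1,Y_2))=\alpha\cup[A_1]+\beta\cup[A_2]$ with $\alpha,\beta\in F^\times/F^{\times m}$ fixed, and such an expression need not lie in $L^\times\cup\tfrac{m}{k}[A_1\otimes A_2]$ unless $\alpha=\beta=0$. So your argument makes explicit a point the paper's terse proof leaves to the reader. The integral bookkeeping you flag (that $\Dec(H)=\Dec(G_1)\oplus\Dec(G_2)$ and the normalization of the $q_i$) is routine once one notes that cross-terms from the two factors die under $c_2$ because each $W_i$-action is essential.
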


\begin{proof}
The tensor product homomorphism $\gSL_{n_1} \times \gSL_{n_2} \to \gSL_{n_1n_2}$
yields a homomorphism
\[
\Sym^2(T_{n_1n_2}^*)\to \Sym^2(T_{n_1}^*)\oplus \Sym^2(T_{n_2}^*),
\]
where $T_{n_1}$, $T_{n_2}$ and $T_{n_1n_2}$ are maximal tori of respective groups. The image of the canonical Weyl-invariant generator $q_{n_1n_2}$
of $\Sym^2(T_{n_1n_2}^*)$ is equal to $n_2 q_{n_1} + n_1 q_{n_2}$. Since $n_1$ and $n_2$ are divisible by $m$, the pull-back of the invariant
$\Delta_{n_1n_2,m}$ under the homomorphism $(\gSL_{n_1}/\gmu_m) \times (\gSL_{n_2}/\gmu_m) \to \gSL_{n_1n_2}/\gmu_m$
is trivial.
\end{proof}

\section{Appendix}

The aim of this section is to verify that the groups of decomposable and semi-decomposable cohomological invariants of the group $\gPGO_8$ coincide.
Following the notation of~\cite{Merkurjev} we have 
\begin{itemize}
\item $\Lambda=\Z e_1\oplus\ldots\Z e_4+\Z e$ where $e=\frac{1}{2}(e_1+e_2+e_3+e_4)$,
\item $T^*$ consists of all $\sum a_ie_i$ with $\sum a_i$ even.
\item $S^2(\Lambda)^W=\Z q$ where $q=\frac{1}{2}(e_1^2+e_2^2+e_3^2+e_4^2)$ and
\item Fundamental weights are 
\[\omega_1=e_1, \omega_2=e_1+e_2, \omega_3=e-e_4,\omega_4=e.\]
\item Simple roots are 
\[\lambda_1=e_1-e_2,\lambda_2=e_2-e_3,\lambda_3=e_3-e_4,\lambda_4=e_3+e_4.\]
\item The Weyl group $W=S_4\rightthreetimes (C_2)^3$ consists of permutations of $e_i$ and sign changes of even number of variables.
\item The $W$-orbits of fundamental weights are given by:

 $W(\omega_1)=\{e_1,e_2,e_3,e_4,-e_1,-e_2,-e_3,-e_4\}$,

 $W(\omega_2)=\{e_1+e_2,\ldots,e_3+e_4,-(e_1+e_2),\ldots,-(e_3+e_4),e_1-e_2,\ldots e_3-e_4,-e_1+e_2,\ldots,-e_3+e_4\}$,

 $W(\omega_3)=\{e-e_1,e-e_2,e-e_3,e-e_4\}$ and
 
$W(\omega_4)=\{e,-e,e-e_1-e_2,e-e_1-e_3,e-e_1-e_4,e-e_2-e_3,e-e_2-e_4,e-e_3-e_4\}$.
\end{itemize}

\medskip

Let $\widehat{e^{\omega_i}}$ denote the sum $\sum_{\lambda\in W(\omega_i)}e^{\lambda}-1$.
Then the ideal $\widetilde{I}^W$ is generated by $\widehat{e^{\omega_i}}, i=1,\ldots, 4$
Note that $\Z[\Lambda]$ is the Laurent polynomial ring $\Z[e^{\pm e_1},\ldots e^{\pm e_4},e^{\pm e}]$ so we represetn it as a quotient of the polynomial ring 
\[Z[\Lambda]=\Z[u_1,v_1,\ldots u_4,v_4,u_5,v_5]/(u_iv_1-1,\ldots u_5v_5-1, u_5^2=u_1u_2u_3u_4)\]
where $u_i= e^{e_i}$, $v_i= e^{-e_i}$ for $i=1\ldots 4$ and $u_5= e^e$ and $v_5=e^{-e}$.
Let $r_i=e^{\lambda_i}$ and $s_i=e^{-\lambda_i}$ for the simple roots $\lambda_i$.

We have an exact sequence
\[
0\to J_1\to \Z [u_1,\ldots, u_5,v_1,\ldots, v_5,r_1,\ldots, r_4,s_1,\ldots ,s_4]\stackrel{\pi}\to\Z[\Lambda]\to 0,\]
where
\[J_1=(u_1v_1-1,\ldots, u_5v_5-1,r_1s_1-1,\ldots, r_4s_4-1,
\]
\[ r_1-u_1v_2,r_2-u_2v_3,r_3-u_3v_4,r_4-u_3u_4,
 u_1u_2u_3u_4-u_5^2)\]
  and the preimage 
\[
\pi^{-1}(\widetilde{I}^W)=J_1+(u_1+\ldots u_4+v_1+\ldots v_4-8,
\]
\[
 u_1u_2+\ldots u_3u_4+v_1v_2+\ldots v_3v_4+\ldots u_1v_2+\ldots u_3v_4+v_1u_2+\ldots v_3u_4-24, 
 \]
 \[
 u_5v_1+u_5v_2+u_5v_3+u_5v_4+u_1v_5+u_2v_5+u_3v_5+u_4v_5-8,
 \]
 \[
 u_5+v_5+u_5v_1v_2+u_5v_1v_3+u_5v_1v_4+u_5v_2v_3+u_5v_2v_4+u_5v_3v_4-8).
 \]
Note that $\Z[T^*]=\pi(\Z[r_1,\ldots r_4,s_1,\ldots s_4])$ and \[\widetilde{I}^W\cap\Z[T^*]=\pi(\pi^{-1}(\widetilde{I}^W)\cap\Z[r_i,s_i])\]
Now we use the Maple software to compute a basis of a bigger intersection: \[\pi^{-1}(\widetilde{I}^W+\widetilde{I}^4)\cap\Z[r_i,s_i].\]
We will prove that for any $x$ in the basis of this intersection~\cite[\S 4b,p.19]{Merkurjev}\[
c_2(\pi(x))\in 4q\Z=\Dec(G).\]
Since $c_2(\widetilde{I}^3)=0$, it is enough to consider the generators that are not contained in $\pi^{-1}(\widetilde{I}^3)$.
We compute the basis of the intersection using the following code:

{
\tt
with(PolynomialIdeals):\\
\# relations ideal:
J1 := $\langle u_1v_1-1, u_2v_2-1, u_3v_3-1, u_4v_4-1, u_5v_5-1, r_1s_1-1, r_2s_2-1, r_3s_3-1, r_4s_4-1, u_1u_2u_3u_4-u_5^2, r_1-u_1v_2, r_2-u_2v_3, r_3-u_3v_4, r_4-u_3u_4\rangle$\\
\# preimage of $\widetilde{I}^W$:
J2 := $\langle u_1+u_2+u_3+u_4+v_1+v_2+v_3+v_4-8, u_5v_1+u_5v_2+u_5v_3+u_5v_4+u_1v_5+u_2v_5+u_3v_5+u_4v_5-8, u_5+v_5+u_5v_1v_2+u_5v_1v_3+u_5v_1v_4+u_5v_2v_3+u_5v_2v_4+u_5v_3v_4-8, -24+u_1v_2+u_2v_3+u_3v_4+u_3u_4+u_1u_2+u_1v_3+u_1v_4+u_2v_4+v_1u_2+u_1u_3+u_1u_4+u_2u_3+u_2u_4+v_1v_2+v_1v_3+v_1v_4+v_2v_3+v_2v_4+v_3v_4+v_1u_3+v_1u_4+v_2u_3+v_2u_4+v_3u_4\rangle$\\
\# preimage of the augmentation ideal:\\
augL := $\langle u[1]-1, v[1]-1, u[2]-1, v[2]-1, u[3]-1, v[3]-1, u[4]-1, v[4]-1, u[5]-1, v[5]-1\rangle$;\\
\# preimages of the square,cube and fourth power of the augmentation ideal:
squarL := Add(Multiply(augL, augL), J1);\\
cubL := Add(Multiply(augL, Multiply(augL, augL)), J1);\\ 
quadL := Add(Multiply(augL, cubL), J1); \\
\# preimage of $\widetilde{I}^W+\widetilde{I}^4$:\\
J := Add(Add(J1, J2), quadL)\\
\# intersection with the subring $\Z[r_i,s_i]$:\\
K := EliminationIdeal(J, {r[1], r[2], r[3], r[4], s[1], s[2], s[3], s[4]}):\\
\# basis of the intersection\\
Gen := IdealInfo[Generators](K):\\
\# print out the elements of the basis that do not lie in $\pi^{-1}(\widetilde{I}^3)$\\
for x in Gen do \\
if not(IdealMembership(x, cubL)) then print(x) end if \\
end do\\
}
This gives a list of 18 polynomials:
\begin{itemize}
\item $ -34 - r_1 s_4 - 2 r_2 s_4 + 6 r_1 + 6 s_1 + 10 r_2
+ 10 s_2 + 4 r_3 + 4 s_3 + 4 r_4 + 4 s_4 - 2 s_2 r_4
- s_1 r_4 - 2 s_2 r_3 - s_1 r_3 + r_4 r_3
- 2 s_3 r_2 - 2 s_1 r_2 - s_3 r_1 - 2 s_2 r_1
+ s_3 s_4$;

\item $   38 + r_1 s_4 + 2 r_2 s_4 + r_3 s_4 - 6 r_1 - 6 s_1
- 10 r_2 - 10 s_2 - 6 r_3 - 6 s_3 - 6 r_4 - 6 s_4
      + s_3 r_4 + 2 s_2 r_4 + s_1 r_4 + 2 s_2 r_3
      + s_1 r_3 + 2 s_3 r_2 + 2 s_1 r_2 + s_3 r_1
      + 2 s_2 r_1$;

 \item $-37 - r_1 s_4 - 2 r_2 s_4 - 3 r_3 s_4 + 6 r_1 + 6 s_1
    + 10 r_2 + 10 s_2 + 7 r_3 + 5 s_3 + 4 r_4 + 7 s_4
  - 2 s_2 r_4 - s_1 r_4 - 2 s_2 r_3 - r^2_3  - s_1 r_3
   + r_4 r_3 - 2 s_3 r_2 - 2 s_1 r_2 - s_3 r_1
  - 2 s_2 r_1 + r^2_3  s_4$;
	
 \item $35 + r_1 s_4 + 2 r_2 s_4 + r_3 s_4 - 6 r_1 - 6 s_1
    - 10 r_2 - 10 s_2 - 3 r_3 - 5 s_3 - 3 r_4 - 6 s_4
    + 2 s_2 r_4 + s_1 r_4 + 2 s_2 r_3 - r^2_3  + s_1 r_3
    - 3 r_4 r_3 + 2 s_3 r_2 + 2 s_1 r_2 + s_3 r_1
    + 2 s_2 r_1 + r^2_3  r_4$;
		
\item $-118 - 6 r_2 s_4 + 14 r_1 + 9 s_1 + 74 r_2 + 46 s_2
   + 14 r_3 + 9 s_3 + 14 r_4 + 9 s_4 + r^2_4  - 10 s_2 r_4
   - 10 s_2 r_3 + r^2_3  + r_4 r_3 - 6 s_3 r_2 - 6 s_1 r_2
   - 8 r_4 r_2 - 8 r_3 r_2 + r^2_1  - 8 r^2_2  - 10 s_2 r_1
   + r_4 r_1 + r_3 r_1 - 8 r_2 r_1 + 3 r_1 r_3 r_4$;
	
\item $-92 + 28 r_1 + 18 s_1 + 52 r_2 + 26 s_2 + 34 r_3 + 12 s_3
   - 8 r_4 + 2 r^2_4  - 2 s_2 r_4 - 8 s_2 r_3 - r^2_3 
   - 9 s_1 r_3 + 2 r_4 r_3 - 6 s_3 r_2 - 12 s_1 r_2
   + 2 r_4 r_2 - 16 r_3 r_2 - r^2_1  - 4 r^2_2  - 3 s_3 r_1
   - 8 s_2 r_1 + 2 r_4 r_1 - 4 r_3 r_1 - 10 r_2 r_1
   + 6 r_2 r_3 s_1$;
	
\item $-92 + 34 r_1 + 12 s_1 + 46 r_2 + 32 s_2 + 34 r_3 + 12 s_3
   - 8 r_4 + 2 r^2_4  - 2 s_2 r_4 - 14 s_2 r_3 - r^2_3 
   - 3 s_1 r_3 + 2 r_4 r_3 - 6 s_3 r_2 - 6 s_1 r_2
   + 2 r_4 r_2 - 10 r_3 r_2 - r^2_1  - 4 r^2_2  - 3 s_3 r_1
   - 14 s_2 r_1 + 2 r_4 r_1 - 10 r_3 r_1 - 10 r_2 r_1
   + 6 r_1 r_3 s_2$;
	
\item $-92 + 34 r_1 + 12 s_1 + 52 r_2 + 26 s_2 + 28 r_3 + 18 s_3
   - 8 r_4 + 2 r^2_4  - 2 s_2 r_4 - 8 s_2 r_3 - r^2_3 
   - 3 s_1 r_3 + 2 r_4 r_3 - 12 s_3 r_2 - 6 s_1 r_2
   + 2 r_4 r_2 - 10 r_3 r_2 - r^2_1  - 4 r^2_2  - 9 s_3 r_1
   - 8 s_2 r_1 + 2 r_4 r_1 - 4 r_3 r_1 - 16 r_2 r_1
   + 6 r_1 r_2 s_3$;
	
\item $-92 - 9 r_1 s_4 - 12 r_2 s_4 + 34 r_1 + 12 s_1 + 52 r_2
   + 26 s_2 - 8 r_3 + 28 r_4 + 18 s_4 - r^2_4  - 8 s_2 r_4
   - 3 s_1 r_4 - 2 s_2 r_3 + 2 r^2_3  + 2 r_4 r_3
   - 6 s_1 r_2 - 10 r_4 r_2 + 2 r_3 r_2 - r_1^2  - 4 r^2_2 
  - 8 s_2 r_1 - 4 r_4 r_1 + 2 r_3 r_1 - 16 r_2 r_1
   + 6 r_1 r_2 s_4$;
	
\item $-92 - 3 r_1 s_4 - 6 r_2 s_4 + 28 r_1 + 18 s_1 + 52 r_2
   + 26 s_2 - 8 r_3 + 34 r_4 + 12 s_4 - r^2_4  - 8 s_2 r_4
   - 9 s_1 r_4 - 2 s_2 r_3 + 2 r^2_3  + 2 r_4 r_3
- 12 s_1 r_2 - 16 r_4 r_2 + 2 r_3 r_2 - r^2_1  - 4 r^2_2 
   - 8 s_2 r_1 - 4 r_4 r_1 + 2 r_3 r_1 - 10 r_2 r_1
   + 6 r_2 r_4 s_1$;
	
\item $-92 - 3 r_1 s_4 - 6 r_2 s_4 + 34 r_1 + 12 s_1 + 46 r_2
   + 32 s_2 - 8 r_3 + 34 r_4 + 12 s_4 - r^2_4  - 14 s_2 r_4
   - 3 s_1 r_4 - 2 s_2 r_3 + 2 r^2_3  + 2 r_4 r_3
   - 6 s_1 r_2 - 10 r_4 r_2 + 2 r_3 r_2 - r^2_1  - 4 r^2_2 
   - 14 s_2 r_1 - 10 r_4 r_1 + 2 r_3 r_1 - 10 r_2 r_1
   + 6 r_1 r_4 s_2$;
	
 \item $80 - 22 r_1 - 12 s_1 - 40 r_2 - 26 s_2 - 22 r_3 - 12 s_3
    + 8 r_4 - 2 r^2_4  + 2 s_2 r_4 + 8 s_2 r_3 + r^2_3 
    + 3 s_1 r_3 - 2 r_4 r_3 + 6 s_3 r_2 + 6 s_1 r_2
    - 2 r_4 r_2 + 4 r_3 r_2 + r^2_1  + 4 r^2_2  + 3 s_3 r_1
    + 8 s_2 r_1 - 2 r_4 r_1 - 2 r_3 r_1 + 4 r_2 r_1
    + 6 r_1 r_2 r_3$;
		
\item $80 + 3 r_1 s_4 + 6 r_2 s_4 - 22 r_1 - 12 s_1 - 40 r_2
   - 26 s_2 + 8 r_3 - 22 r_4 - 12 s_4 + r^2_4  + 8 s_2 r_4
   + 3 s_1 r_4 + 2 s_2 r_3 - 2 r^2_3  - 2 r_4 r_3
   + 6 s_1 r_2 + 4 r_4 r_2 - 2 r_3 r_2 + r^2_1  + 4 r^2_2 
   + 8 s_2 r_1 - 2 r_4 r_1 - 2 r_3 r_1 + 4 r_2 r_1
   + 6 r_1 r_2 r_4$;
	
\item $-34 - 3 r_1 s_4 + 26 r_1 + 18 s_1 - 10 r_2 + 4 s_2 - 4 r_3
   + 6 s_3 - 4 r_4 + 6 s_4 + r^2_4  + 2 s_2 r_4 - 3 s_1 r_4
   + 2 s_2 r_3 + r^2_3  - 3 s_1 r_3 - 2 r_4 r_3
   - 6 s_1 r_2 + 4 r_4 r_2 + 4 r_3 r_2 - 2 r^2_1  + 4 r^2_2 
   - 3 s_3 r_1 - 4 s_2 r_1 - 2 r_4 r_1 - 2 r_3 r_1
   - 2 r_2 r_1 + 6 r_2 r_3 r_4$;
	
\item $22 + 3 r_1 s_4 - 26 r_1 - 18 s_1 + 16 r_2 + 2 s_2 + 16 r_3
   - 6 s_3 + 16 r_4 - 6 s_4 - r^2_4  - 8 s_2 r_4
   + 3 s_1 r_4 - 8 s_2 r_3 - r^2_3  + 3 s_1 r_3
   - 10 r_4 r_3 + 6 s_1 r_2 - 10 r_4 r_2 - 10 r_3 r_2
   + 2 r^2_1  - 4 r^2_2  + 3 s_3 r_1 + 4 s_2 r_1 + 2 r_4 r_1
   + 2 r_3 r_1 + 2 r_2 r_1 + 6 r_3 r_4 s_2$;
	
\item $112 - 3 r_1 s_4 + 6 r_2 s_4 - 3 r_3 s_4 - 8 r_1 - 9 s_1
   - 74 r_2 - 46 s_2 - 8 r_3 - 9 s_3 - 11 r_4 - 6 s_4
   - r^2_4  + 10 s_2 r_4 + 10 s_2 r_3 - r^2_3  - 4 r_4 r_3
   + 6 s_3 r_2 + 6 s_1 r_2 + 8 r_4 r_2 + 8 r_3 r_2
   - r^2_1  + 8 r^2_2  + 10 s_2 r_1 - 4 r_4 r_1 - 7 r_3 r_1
   + 8 r_2 r_1 + 3 r_1 r_3 s_4$;
	
\item $112 + 6 r_2 s_4 - 11 r_1 - 6 s_1 - 74 r_2 - 46 s_2 - 8 r_3
   - 9 s_3 - 8 r_4 - 9 s_4 - r^2_4  + 10 s_2 r_4
   - 3 s_1 r_4 + 10 s_2 r_3 - r^2_3  - 3 s_1 r_3
   - 7 r_4 r_3 + 6 s_3 r_2 + 6 s_1 r_2 + 8 r_4 r_2
   + 8 r_3 r_2 - r^2_1  + 8 r^2_2  + 10 s_2 r_1 - 4 r_4 r_1
   - 4 r_3 r_1 + 8 r_2 r_1 + 3 r_3 r_4 s_1$;
	
\item $22 + 3 r_1 s_4 - 6 r_2 s_4 - 6 r_3 s_4 - 26 r_1 - 18 s_1
   + 22 r_2 - 4 s_2 + 16 r_3 - 6 s_3 + 10 r_4 - r^2_4 
   - 2 s_2 r_4 + 3 s_1 r_4 - 2 s_2 r_3 - r^2_3 
   + 3 s_1 r_3 - 4 r_4 r_3 + 6 s_1 r_2 - 10 r_4 r_2
   - 16 r_3 r_2 + 2 r^2_1  - 4 r^2_2  + 3 s_3 r_1 + 4 s_2 r_1
   + 2 r_4 r_1 + 2 r_3 r_1 + 2 r_2 r_1 + 6 r_2 r_3 s_4$;
\end{itemize}

Take the first element of the list
\[y=-34 - r_1 s_4 - 2 r_2 s_4 + 6 r_1 + 6 s_1 + 10 r_2
+ 10 s_2 + 4 r_3 + 4 s_3 + 4 r_4 + \]
\[
4 s_4 - 2 s_2 r_4
- s_1 r_4 - 2 s_2 r_3 - s_1 r_3 + r_4 r_3
- 2 s_3 r_2 - 2 s_1 r_2 - s_3 r_1 - 2 s_2 r_1
+ s_3 s_4.\]

We compute $c_2(y)$ as the second term in the power series expansion of
\[
(1+(l_1-l_4)t)^{-1}(1+(l_2+l_4)t)^{-2}(1+l_1t)^6(1-l_1t)^6(1+l_2t)^{10}\]
\[
(1-l_2t)^{10}(1+l_3t)^4(1-l_3t)^4(1+l_4t)^4(1-l_4t)^4(1+(l_2+l_4)t)^{-2}\]
\[
(1+(-l_1+l_4)t)^{-1}(1+(-l_2+l_3)t)^{-2}(1+(-l_1+l_3)t)^{-1}(1+(l_4+l_3)t)\]
\[
(1+(-l_3+l_2)t)^{-2}(1+(-l_1+l_2)t)^{-2}(1+(-l_3+l_1)t)^{-1}(1+(-l_2+l_1)t)^{-2}
(1+(-l_3+l_4)t)
\]
where $l_1 = e1]-e_2, l_2 = e_2-e_3, l_3 = e_3-e_4, l_4 = e_3+e_4$.

Computation shows that $c_2(y)=-2(e_1^2+e_2^2+e_3^2+e_4^2)=-4q$.

As the last step we show that for every generator $x$ that does not lie in $\pi^{-1}(\widetilde{I}^3)$ we have that either $x-y$ or $x+y$ $x-2y$ or $x+2y$ lies in $\pi^{-1}(\widetilde{I}^3)$.

To do this we use the following Maple code:

{
\tt
for x in Gen do \\
if not IdealMembership(x, cubL) and IdealMembership(x, squarL)
and \\ (IdealMembership(x-y, cubL) or IdealMembership(x+y, cubL) or \\ IdealMembership(x+2y, cubL) or IdealMembership(x-2y, cubL) )  \\ then print(x) end if \\
end do
}

It returns the same list of 18 polynmials, so we see that for every generator $x$ we have $c_2(x)\in 4q\Z,$ so $\SDec(G)\subseteq\Dec(G).$

\end{document}